\newtheorem{theorem}{Theorem}[section]
\newtheorem*{theorem*}{Theorem}
\newtheorem{definition}[theorem]{Definition}
\newtheorem{lemma}[theorem]{Lemma}
\newtheorem{corollary}[theorem]{Corollary}
\newtheorem{remark}[theorem]{Remark}
\newtheorem{proposition}[theorem]{Proposition}
\newcommand{\Hom}{\mathsf{Hom}}
\newcommand{\RR}{\mathbb{R}}
\newcommand{\CC}{\mathbb{C}}
\newcommand{\ZZ}{\mathbb{Z}}
\newcommand{\ZZtwo}{\ZZ/2\ZZ}
\newcommand{\GL}{\mathsf{GL}}
\newcommand{\SL}{\mathsf{SL}}
\newcommand{\SO}{\mathsf{SO}}
\newcommand{\SU}{\mathsf{SU}}
\newcommand{\U}{\mathsf{U}}
\newcommand{\End}{\mathsf{End}}
\renewcommand{\O}{\mathsf{O}}
\newcommand{\CP}{\mathbb{CP}}
\newcommand{\F}{\mathcal{F}}
\newcommand{\brak}[1]{\left<#1\right>}
\renewcommand{\P}{\mathcal{P}}
\newcommand{\floor}[1]{\lfloor #1 \rfloor}
\newcommand{\Sc}{\mathcal{S}}
\newcommand{\abs}[1]{\left|#1\right|}
\newcommand{\spec}{\mathsf{Spec}}
\newcommand{\NN}{\mathbb{N}}
\newcommand{\QQ}{\mathbb{Q}}
\newcommand{\bE}{\mathbf{E}}
\newcommand{\bI}{\mathbf{I}}
\newcommand{\St}{\mathsf{St}}
\newcommand{\define}[1]{\textbf{#1}}
\title{On the Existence of Parseval Frames for Vector Bundles}
\author{Samuel A.\ Ballas}
\address{Department of Mathematics, Florida State University, Tallahassee, FL 32306}
\email{sballas@fsu.edu}
\author{Tom Needham}
\address{Department of Mathematics, Florida State University, Tallahassee, FL 32306}
\email{tneedham@fsu.edu}
\author{Clayton Shonkwiler}
\address{Department of Mathematics, Colorado State University, Fort Collins, CO 80523}
\email{clayton.shonkwiler@colostate.edu}
\date{\today}
\begin{document}

\begin{abstract}
    Frames in finite-dimensional vector spaces are spanning sets of vectors which provide redundant representations of signals. The \textit{Parseval frames} are particularly useful and important, since they provide a simple reconstruction scheme and are maximally robust against certain types of noise. In this paper we describe a theory of frames on arbitrary vector bundles---this is the natural setting for signals which are realized as parameterized families of vectors rather than as single vectors---and discuss the existence of Parseval frames in this setting. Our approach is phrased in the language of $G$-bundles, which allows us to use many tools from classical algebraic topology. In particular, we show that orientable vector bundles always admit Parseval frames of sufficiently large size and provide an upper bound on the necessary size. We also give sufficient conditions for the existence of Parseval frames of smaller size for tangent bundles of several families of manifolds, and provide some numerical evidence that Parseval frames on vector bundles share the desirable reconstruction properties of classical Parseval frames. 
\end{abstract}

\maketitle

\section{Introduction}

A \emph{frame} for a (real or complex) Hilbert space $V$ is a sequence of vectors $(f_j)_{j \in \mathcal{J}}$ for which there are real constants $a,b > 0$ satisfying the bounds
\begin{equation}\label{eqn:frame_bounds}
a \|v\|^2 \leq \sum_{j \in \mathcal{J}} |\langle v, f_j \rangle |^2 \leq b \|v\|^2
\end{equation}
for all $v \in V$. A frame is called \emph{tight} if one can take $a = b$ and \emph{Parseval} (or \emph{normalized}) if one can take $a = b  = 1$. An orthonormal basis for $V$ is a Parseval frame, due to \emph{Parseval's identity} (the source of the nomenclature), but one generally considers frames which are \emph{overcomplete} or \emph{redundant}, in the sense that the collection $\{f_j\}_{j \in \mathcal{J}}$ is linearly dependent (although a frame must always be spanning, due to the lower bound in~\eqref{eqn:frame_bounds}). Frames appear in signal processing applications, where the Hilbert space represents a space of signals and the sequence of inner products $(\langle v, f_j \rangle)_{j \in \mathcal{J}}$ represents an encoding of the signal $v$ with respect to the frame. This encoding scheme is preferred in many applications over encoding with respect to an orthonormal basis due to its robustness properties: overcomplete Parseval frames are more resilient to both additive measurement noise (e.g.,~\cite{munch1992noise,benedetto2001wavelet}) and information loss due to transmission errors (e.g.,~\cite{bodmann2005frames,casazza2003equal}), where resilience is quantified via bounds on the error incurred when reconstructing the signal from its corrupted measurements.

Due to computational concerns, one typically deals in practice with a finite-dimensional approximation of the signal space. As a result, frame theory in the setting of finite-dimensional Hilbert spaces has been intensely studied in the past few decades---see the survey monographs~\cite{kovavcevic2008introduction,waldron2018introduction} and the references therein. In the finite setting, one typically considers a frame to be a \emph{finite} sequence of vectors, and the existence of constants satisfying~\eqref{eqn:frame_bounds} is equivalent to the statement that the sequence gives a spanning set of $V$. Due to this simplified interpretation of the basic definition, there is particular emphasis on studying finite frames with more interesting prescribed properties, such as that of being Parseval. Much is known about the collection of all Parseval frames consisting of a fixed number of vectors in a fixed finite-dimensional Hilbert space; for example, it is non-empty and is  equivalent to a well-known algebraic variety (a \emph{Stiefel manifold})~\cite{strawn2011finite}.

In this paper, we study existence properties of smoothly-varying families of frames. That is, we consider a rank-$k$ (real or complex) vector bundle $E \to M$ over a smooth manifold $M$ equipped with a smoothly varying family of (Hermitian, in the complex case) inner products. In this context, given an integer $n \geq k$, we say that a collection $(\sigma_j)_{j=1}^n$ of sections $\sigma_j:M\to E$ is a \emph{(Parseval) frame of size $n$} for $E\to M$ if for each $p\in M$, $(\sigma_j(p))_{j=1}^n$ is a (Parseval) frame for the fiber $E_p$. In this paper we address the following.

\smallskip
\noindent{\bf Main Question:} What conditions on the bundle $E \to M$ guarantee the existence of Parseval frame of size $n$ on $E$?

\smallskip
\noindent This question has already been considered in~\cite{freeman2012moving,freeman2012moving2,kuchment2008tight} (prior work is discussed more thoroughly below in Section~\ref{subsec:previous_work}). This paper takes a somewhat more general perspective than these earlier works, and was inspired by the observation that the Main Question should be susceptible to methods from classical algebraic topology, such as obstruction theory~\cite{Steenrod_Fibre_Bundles}. We make partial progress on the Main Question from several fronts, as we outline below in Section~\ref{subsec:main_results}.

While the primary motivation for this work is theoretical, we also believe that there are applications-oriented reasons for one to consider frames on vector bundles. On the theoretical side, we find it natural to explore the extent to which results from the classical theory of frames for a single finite-dimensional vector space can be generalized to frames on vector bundles, or collections of frames on parametric families of vector spaces. Our results show that the topic is likely to be rather subtle, and offers many directions for future work. On the practical side, it is not hard to imagine situations in which the signal of interest is not just a single vector in a fixed vector space, but rather a collection of vectors which are smoothly parameterized by some manifold---that is, the signal is a smooth section of a vector bundle. For example, such smooth sections are useful in geometric deep learning as a way to give local coordinates for feature spaces sitting over each point in an underlying data manifold~\cite{melzi2019gframes,bronstein2021geometric}. A frame for the vector bundle provides a method to encode these smooth sections. Recovery of a signal in this setting involves inverting a certain linear operator (the \emph{frame operator}, see Section~\ref{sec:frames_in_vector_spaces} for details) for the frame at each point of the manifold, a task which is both computationally taxing and potentially numerically unstable. This computational burden is removed under the condition that the frame is Parseval at each point, in which case the reconstruction process is much more straightforward. Beyond this basic computational benefit, we expect that Parseval frames on vector bundles will enjoy enhanced robustness to noise over general frames, in analogy with the results from classical frame theory. We provide some initial evidence for this conjecture in a numerical experiment; see Section~\ref{subsec:numerical_experiment}.
A rigorous study of these practical considerations is beyond the scope of this paper, but is planned to be the subject of future work.

\subsection{Previous work}\label{subsec:previous_work} Parseval frames on vector bundles have been studied previously in the literature; we now briefly survey some prior results. To our knowledge, the earliest results on the topic appear in~\cite{kuchment2008tight} in the context of mathematical physics. The interest is in the existence of a Parseval frame for a certain analytic vector bundle over a torus  and an upper bound on the required number of frame vectors is derived. This work is continued in~\cite{kuchment2016overview,auckly2018parseval}, where the bound on the number of required vectors is improved. These results are described in more detail below in Section~\ref{subsec:main_results}.  Parseval frames on arbitrary vector bundles are considered in~\cite{freeman2012moving}. Here, analogues of the Naimark--Han--Larson Dilation Theorem~\cite{han2000frames}---that any Parseval frame can be realized as the image of an orthonormal basis for some higher-dimensional space under an orthogonal projection---are established in the vector bundle setting. In~\cite{freeman2012moving2}, the question of the existence of \emph{unit norm tight frames} (that is, tight frames whose frame vectors are unit vectors) on $n$-spheres is considered. There it is shown that every odd-dimensional sphere admits a unit norm tight frame, in contrast to the well-known topological result that the spheres which admit global orthonormal frames are only those of dimension $1$, $3$, or $7$. However, unit norm tight frames are still rather uncommon since the sections comprising a unit norm tight frame are each nowhere-vanishing. Thus, these types of frames can only exist for vector bundles with trivial Euler class. 

There have been several other papers which illustrate or exploit connections between geometry/topology and frame theory. These works have some overlap in terminology, but generally deal with quite different aspects of the subject than this paper. For example,~\cite{agrawal2015fiber} considers bundle-theoretic aspects of frame theory, but considers bundle structures on spaces of frames, rather than frames defined for bundles. The recent paper~\cite{bownik2022parseval} treats Parseval frames for Riemannian manifolds, but the existence questions considered in the present paper are not pertinent there. There has been much work on the geometry and topology of moduli spaces of Parseval frames on a fixed Hilbert space---see, for example,~\cite{dykema2003manifold,
strawn2011finite,cahill2017connectivity} or prior work of the last two authors of this paper~\cite{needham2023fusion,needham2022admissibility,needham2022toric,needham2021symplectic,mixon2021three}. None of these papers consider frames on vector bundles, and the extent to which their results generalize will be the subject of future work.

\subsection{Main results}\label{subsec:main_results} We now describe our main contributions. Our first main results concern the existence of Parseval frames on both real and complex vector bundles. Roughly speaking, these results show that for a vector bundle $E\to M$ (satisfying mild hypotheses) equipped with a smoothly varying fiberwise inner product (resp.\ Hermitian form), which we refer to as a \emph{Riemannian structure} (resp.\ \emph{Hermitian structure}), there is a natural number $n$ so that $E\to M$ admits Parseval frames of size $n$. Moreover, the value $n$ is explicit, depends only on the rank $k$ of the vector bundle and the dimension $d$ of the base manifold, and grows linearly with respect to both $k$ and $d$. 


\begin{theorem*}[Theorem \ref{thm:parseval_sections_exist}]
Let $E\to M$ be a rank-$k$ orientable real vector bundle over a $d$-dimensional smooth manifold. Suppose further that $E\to M$ is equipped with a Riemannian structure. Then there exists a Parseval frame of size $d+k$ for $E\to M$. 
\end{theorem*}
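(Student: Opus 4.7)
My plan is to reformulate the existence of a Parseval frame of size $n$ on $E$ as the existence of a section of an associated fiber bundle over $M$, and then invoke classical obstruction theory. The key first observation is that a Parseval frame of size $n$ on $E\to M$ is equivalent to a smooth isometric bundle embedding $\iota\colon E \hookrightarrow M\times \RR^n$ into the trivial rank-$n$ bundle: the fiberwise analysis operator $v\mapsto(\brak{v,\sigma_j(p)})_{j=1}^n$ of a Parseval frame $(\sigma_j)$ is an isometric embedding by Parseval's identity, and conversely, pulling back the standard basis of $\RR^n$ via the orthogonal projection onto $\iota(E)$ recovers a Parseval frame.

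Next, using orientability and the Riemannian structure, let $P\to M$ be the principal $\SO(k)$-bundle of oriented orthonormal frames of $E$. Fiberwise, an isometric embedding $E_p\hookrightarrow \RR^n$ is parametrized by the Stiefel manifold $V_k(\RR^n)$ of orthonormal $k$-frames in $\RR^n$, with $\SO(k)$ acting by change of orthonormal basis on the domain. Parseval frames of size $n$ on $E$ therefore correspond bijectively to sections of the associated fiber bundle
\[
P\times_{\SO(k)} V_k(\RR^n)\longrightarrow M.
\]

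I would then apply obstruction theory to a CW decomposition of $M$. The critical input is the classical fact that $V_k(\RR^n)$ is $(n-k-1)$-connected, proved by induction on $k$ using the fibration $V_{k-1}(\RR^{n-1})\to V_k(\RR^n)\to S^{n-1}$ and the connectivity of spheres. Setting $n=d+k$ makes the fiber $(d-1)$-connected, so every obstruction class in $H^{i+1}(M;\pi_i(V_k(\RR^n)))$ vanishes: for $i\leq d-1$ the coefficient group itself is zero, and for $i\geq d$ the cohomology vanishes because $\dim M=d$. The skeleton-by-skeleton extension of sections therefore goes through, producing a continuous section that can be smoothed by a standard approximation argument.

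The main technical points to watch are the potential twisting of the coefficient system by $\pi_1(M)$ (not really a concern here, since $\SO(k)$ is connected so monodromy acts trivially on homotopy groups of the fiber, and the relevant homotopy groups vanish in the range of interest anyway) and the passage from a continuous section to a smooth section, which is a standard smoothing argument for sections of smooth fiber bundles. The hard part of implementing this plan is therefore careful organization of the obstruction-theoretic bookkeeping to confirm that $n=d+k$ is genuinely large enough, rather than any single nonroutine step.
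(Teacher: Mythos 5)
Your proposal is correct and follows essentially the same route as the paper: both reformulate a size-$n$ Parseval frame as a section of an associated fiber bundle over $M$ with fiber the Stiefel manifold $\St_k(\RR^n)$ (your $P\times_{\SO(k)}V_k(\RR^n)$ is isomorphic to the paper's Parseval bundle $\P^n(E)$, since the $\SO(n)$ factor acts on an already-trivialized summand), and both then invoke the $(n-k-1)$-connectivity of the Stiefel manifold together with standard obstruction theory over a triangulation of $M$, followed by Steenrod-type smoothing of the resulting continuous section. The paper packages the obstruction-theoretic bookkeeping by citing Corollary~29.3 of Steenrod directly, whereas you spell out the vanishing of the obstruction classes, but the content is identical.
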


\noindent There is also a version of this result for complex vector bundles. In this case we find the bounds are even lower.

\begin{theorem*}[Theorem \ref{thm:parseval_sections_exist_complex}]
Let $E\to M$ be a rank-$k$ complex vector bundle over a $d$-dimensional smooth (real) manifold. Suppose further that $E\to M$ is equipped with a Hermitian structure. Then there exists a Parseval frame of size $\floor{d/2}+k$ for $E\to M$. 
\end{theorem*}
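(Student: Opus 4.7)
The plan is to adapt the strategy used for the real case (Theorem~\ref{thm:parseval_sections_exist}): recast the existence of a Parseval frame of size $n$ as the existence of a section of an associated complex Stiefel bundle, and then invoke obstruction theory, exploiting the higher connectivity available in the complex setting.

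First I would set up the section--frame dictionary in the Hermitian setting. A Parseval frame of size $n$ for a $k$-dimensional Hermitian space $V$ is the same data as a complex-linear isometric embedding $V \hookrightarrow \mathbb{C}^n$, namely the analysis map $v \mapsto (\langle v, f_j\rangle)_{j=1}^n$; equivalently, a point of the complex Stiefel manifold $V_k(\mathbb{C}^n) = \U(n)/\U(n-k)$. Globalizing, Parseval frames of size $n$ on the Hermitian bundle $E\to M$ correspond bijectively to sections of the fiber bundle $\mathcal{F}_n(E)\to M$ obtained by letting the principal $\U(k)$-bundle of unitary frames of $E$ act on $V_k(\mathbb{C}^n)$ in the standard way.

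Second, I would record the connectivity of the fiber. The fibration $\U(n-k)\to \U(n)\to V_k(\mathbb{C}^n)$, together with the classical fact that $\U(n-k)\hookrightarrow \U(n)$ is $(2(n-k)+1)$-connected, shows via the long exact sequence that $V_k(\mathbb{C}^n)$ is $2(n-k)$-connected. For the target value $n = \lfloor d/2 \rfloor + k$ this gives connectivity $2\lfloor d/2\rfloor$, which is at least $d-1$ for every $d$.

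Third, I would run obstruction theory for sections of $\mathcal{F}_n(E)\to M$. Because $\U(k)$ is connected, $\mathsf{BU}(k)$ is simply connected and $\mathcal{F}_n(E)$ has trivial monodromy on the homotopy groups of its fiber; the obstructions to building a section over the $d$-skeleton of $M$ thus lie in ordinary cohomology groups $H^{i+1}(M;\pi_i(V_k(\mathbb{C}^n)))$ for $0 \le i \le d-1$, all of which vanish by the connectivity bound. A standard smoothing argument for smooth fiber bundles then upgrades the resulting continuous section to a smooth one, yielding the desired Parseval frame.

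The main point requiring care is the parity check $2\lfloor d/2 \rfloor \geq d-1$, which is tight when $d$ is odd and is precisely what allows $\lfloor d/2\rfloor$ (rather than $\lceil d/2\rceil$) to suffice. I would also emphasize that, unlike the real theorem, no orientability hypothesis is required here: the complex structure makes $\U(k)$ connected from the start, so the bundle of unitary frames already has connected structure group and the monodromy subtleties that force the orientability assumption in the real case simply do not appear.
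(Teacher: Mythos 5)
Your proposal is correct and follows essentially the same route as the paper: realize size-$n$ Parseval frames as sections of an associated bundle with fiber the $2(n-k)$-connected complex Stiefel manifold $\St_k(\CC^n)$ and apply obstruction theory over a triangulation, with the parity observation $2\floor{d/2}\geq d-1$ doing the real work (a point the paper itself glosses over slightly). One pedantic note: the inclusion $\U(n-k)\hookrightarrow \U(n)$ is in general only $2(n-k)$-connected, not $(2(n-k)+1)$-connected, but this still yields exactly the $2(n-k)$-connectivity of the fiber that your argument needs.
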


\noindent Note that no orientability hypothesis is needed here since complex vector bundles are automatically orientable. We also show that, while one might expect that the conditions for existence of a Parseval frame on a (real or complex) vector bundle are more restrictive than those for existence of a general frame, this is not the case.

\begin{theorem*}[Theorem \ref{thm:parseval_deformation_retract} and Corollary \ref{cor:parseval_if_and_only_if_frame}]
Let $E \to M$ be an orientable real vector bundle endowed with a Riemannian structure. There is a size-$n$ Parseval frame for $E \to M$ if and only if there is a size-$n$ frame.
\end{theorem*}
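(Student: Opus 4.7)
The plan is to exhibit $\mathsf{Par}_n(E)\to M$ as a fiberwise strong deformation retract of $\mathsf{Fr}_n(E)\to M$; the corollary will then follow by composing a section with the homotopy.

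First I would set up the associated fiber bundles. The Riemannian structure equips $E\to M$ with an $O(k)$-structure (orientability further reduces to $SO(k)$, but this is inessential for what follows). Form the associated bundles $\mathsf{Fr}_n(E)\to M$ and $\mathsf{Par}_n(E)\to M$ with typical fibers
\[
F := \{A\in \RR^{k\times n} : \operatorname{rank} A = k\} \quad \text{and} \quad S := \{A\in \RR^{k\times n} : AA^T = I_k\},
\]
on which $O(k)$ acts by left multiplication $U\cdot A := UA$. A size-$n$ frame (respectively, Parseval frame) on $E\to M$ is the same thing as a global section of $\mathsf{Fr}_n(E)$ (respectively, $\mathsf{Par}_n(E)$).

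Next I would construct the fiberwise deformation retraction via polar decomposition. For $A\in F$ the Gram matrix $AA^T$ is symmetric positive definite, so $(AA^T)^{-1/2}$ is well defined. Set
\[
r(A) := (AA^T)^{-1/2} A, \qquad H_t(A) := \bigl[(1-t)I_k + t(AA^T)^{-1/2}\bigr] A, \quad t \in [0,1].
\]
Then $r(A)\in S$; the bracketed prefactor is positive definite for every $t$, hence invertible, so $H_t(A)\in F$; and one checks $H_0 = \operatorname{id}_F$, $H_1 = i\circ r$ with $i : S \hookrightarrow F$ the inclusion, and $H_t|_S = \operatorname{id}_S$. The essential observation is that $r$ and $H$ are $O(k)$-equivariant: $(UA)(UA)^T = U(AA^T)U^T$, whence $((UA)(UA)^T)^{-1/2} = U(AA^T)^{-1/2}U^T$, and equivariance of both $r$ and $H_t$ is immediate.

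By equivariance, the data $(r,H)$ glue across local trivializations of $E\to M$ to a fiberwise strong deformation retraction $\tilde H$ of $\mathsf{Fr}_n(E)$ onto $\mathsf{Par}_n(E)$, which is Theorem~\ref{thm:parseval_deformation_retract}. The corollary then follows: from a size-$n$ frame $\sigma$ we obtain the size-$n$ Parseval frame $\tilde H_1 \circ \sigma$, and the converse implication is tautological. The only mildly technical point is the continuity (in fact smoothness) of $A \mapsto (AA^T)^{-1/2}$ on the open cone of positive-definite matrices, which is standard via the functional calculus. I do not anticipate any serious obstacle here; the argument reduces essentially to the fact that the classical polar decomposition is equivariant under the relevant structure-group action.
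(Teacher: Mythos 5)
Your proposal is correct, and the fiber-level mechanism (polar decomposition, with the straight-line homotopy through positive-definite symmetric prefactors) is exactly the one the paper uses in Proposition~\ref{prop:Parseval_hom_eq_to_frames}. Where you genuinely diverge is in how you globalize. The paper takes the soft route: it only uses that the fiber inclusion $\P^n(\RR^k)\hookrightarrow\F^n(\RR^k)$ is a homotopy equivalence, then runs the long exact sequences of the two fibrations through the five lemma, invokes Whitehead's theorem to get a homotopy equivalence of total spaces, and finally cites the standard criterion upgrading this to a fiber homotopy equivalence. You instead observe that the retraction $r(A)=(AA^T)^{-1/2}A$ and the homotopy $H_t$ are equivariant for the structure group (indeed for the full $\O(k)\times\O(n)$ action, since $(UAV^T)(UAV^T)^T=U(AA^T)U^T$), so they descend to the associated bundles and give the fiberwise strong deformation retraction directly. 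Your argument is more elementary and in one respect sharper: it literally produces the strong deformation retract asserted in Theorem~\ref{thm:parseval_deformation_retract}, whereas the paper's machinery delivers a fiber homotopy equivalence and avoids any explicit formula; it also sidesteps Whitehead's theorem and hence any implicit CW hypotheses. What the paper's approach buys in exchange is robustness --- it would apply verbatim to any fiberwise-inclusion of associated bundles whose fiber inclusion is a homotopy equivalence, without needing an equivariant retraction in hand. The deduction of Corollary~\ref{cor:parseval_if_and_only_if_frame} by composing a section with $\tilde H_1$ (and the tautological converse) matches the paper.
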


\noindent The key idea here is that the space of all frames of a fixed size on a Hilbert space deformation retracts onto the space of all Parseval frames of that size. Moreover, the retraction preserves the underlying fiber bundle structures.

As mentioned in the previous section, in~\cite{auckly2018parseval, kuchment2008tight,kuchment2016overview} the authors provide bounds for the dimensions in which Parseval frames exist for certain rank-$k$ complex vector bundles (so called \emph{Bloch bundles}) $E\to T^d$ over a $d$-dimensional torus. In this context, it is shown in~\cite{kuchment2008tight} that there exist a Parseval frame for $E\to T^d$ of size at most $2^dk$. This bound grows exponentially in $d$, and so one sees that the bound in Theorem~\ref{thm:parseval_sections_exist_complex} is considerably sharper. Subsequently, in~\cite{auckly2018parseval}, the authors are able to improve the above bound in the cases where $d=3$ and $d=4$. Specifically, they are able to show that when $d=3$ and $E\to T^3$ is a rank-$k$ Bloch bundle there exists a Parseval frame of size $k+1$, and when $d=4$ and $E\to T^4$ is a rank-$k$ Bloch bundle there exists a Parseval frame of size $k+2$. Note that in both of these cases, the results agree with the bounds coming from Theorem \ref{thm:parseval_sections_exist_complex} (i.e. $\floor{3/2}=1$ and $\floor{4/2}=2)$. 

Once it is known that Parseval frames exist for arbitrary (orientable) vector bundles, it is natural to ask how small such a frame can be. If $E\to M$ is a rank-$k$ vector bundle then the smallest possible size of a Parseval frame is $k$. Furthermore, it is not difficult to see that a Parseval frame of size $k$ provides a global trivialization of $E$, and so such a Parseval frame can exist if and only if $E$ is a trivial bundle. Thus, given a non-trivial rank-$k$ vector bundle, the best one can hope for is a Parseval frame of size $k+1$. With this in mind, we restrict our attention to tangent bundles over $d$-manifolds and give the following characterization of the existence of a size-$(d+1)$ Parseval frame, in terms of \emph{stable parallelizability}, a well-studied concept in classical geometric and algebraic topology. 

\begin{theorem*}[Theorem \ref{thm:stably_parallel}]
If $M$ is a stably parallelizable orientable Riemannian $d$-manifold, then $TM$ admits a Parseval frame of size $d+1$. Moreover, if $H^1(M,\ZZtwo)=0$ then the converse is also true.
\end{theorem*}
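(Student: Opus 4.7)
The plan is to treat this as a Naimark-dilation problem: size-$(d+1)$ Parseval frames on $TM$ should correspond to isometric splittings $\epsilon^{d+1} \cong TM \oplus L$ with $L$ a real line bundle, under which orthonormal bases of $\epsilon^{d+1}$ project to Parseval frames on $TM$, and conversely Parseval frames dilate to such splittings. Everything then reduces to the topological question of when $L$ may be taken trivial.

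For the forward direction, I would invoke the classical fact (essentially due to Kervaire--Milnor) that for a closed smooth $d$-manifold, stable parallelizability is equivalent to $TM \oplus \epsilon^1$ being trivial. This supplies a bundle isomorphism $\phi\colon TM \oplus \epsilon^1 \to \epsilon^{d+1}$. Equipping $\epsilon^1$ with any metric, a fiberwise Gram--Schmidt applied to the pullback of the standard frame of $\epsilon^{d+1}$ lets me arrange $\phi$ to be isometric with respect to the standard metric on the target. Writing $\iota\colon TM \hookrightarrow \epsilon^{d+1}$ for the resulting isometric inclusion and $\pi = \iota^*$ for its orthogonal adjoint, I would verify that $\sigma_j := \pi(e_j)$, $j=1,\ldots,d+1$, is a Parseval frame via the direct calculation
\[ \sum_{j=1}^{d+1} \langle v, \sigma_j(p)\rangle^2 \;=\; \sum_{j=1}^{d+1} \langle \iota(v), e_j\rangle^2 \;=\; \|\iota(v)\|^2 \;=\; \|v\|^2 \]
for $v \in T_p M$, using only orthonormality of $(e_j)$ and the fact that $\iota$ is isometric.

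For the converse, given a Parseval frame $(\sigma_j)_{j=1}^{d+1}$ on $TM$, I would consider the analysis operator $\Theta\colon TM \to \epsilon^{d+1}$ defined by $\Theta(v) = \sum_j \langle v, \sigma_j\rangle e_j$. The Parseval identity says exactly that $\Theta$ is a fiberwise isometric embedding, so $\Theta(TM)$ is a rank-$d$ subbundle of $\epsilon^{d+1}$ whose orthogonal complement is a real line bundle $L$ satisfying $TM \oplus L \cong \epsilon^{d+1}$. Under the hypothesis $H^1(M,\ZZtwo) = 0$, the classification of real line bundles by the first Stiefel--Whitney class forces $L$ to be trivial, so $TM \oplus \epsilon^1 \cong \epsilon^{d+1}$ and $M$ is stably parallelizable; orientability is automatic since $w_1(TM)$ lies in the vanishing group $H^1(M,\ZZtwo)$.

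The main obstacle I anticipate is the dimension-reduction step invoked at the start of the forward direction: stable parallelizability a priori only supplies a trivialization of $TM \oplus \epsilon^N$ for some $N$, whereas a frame of size exactly $d+1$ demands $N=1$. This reduction is the one genuinely topological input of the argument, and I would cite a precise reference (e.g., Kervaire--Milnor, or a standard differential-topology textbook) rather than reprove it; everything else is essentially linear-algebraic bookkeeping with the analysis and synthesis operators.
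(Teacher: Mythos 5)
Your proposal is correct and follows essentially the same route as the paper: the forward direction projects a global orthonormal frame of the trivialized $TM\oplus\RR$ onto $TM$ (the paper invokes the finite-dimensional Naimark--Han--Larson lemma where you verify the Parseval identity directly), and the converse produces a complementary line bundle and kills it using the classification of line bundles by $H^1(M,\ZZtwo)$. The only cosmetic differences are that you inline the bundle-level dilation via the analysis operator where the paper cites the Riemannian-bundle version of the dilation theorem from Freeman et al., and your worry about reducing $TM\oplus\epsilon^N$ to $TM\oplus\epsilon^1$ is moot here since the paper \emph{defines} stable parallelizability as triviality of $TM\oplus\RR$.
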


This result immediately yields many examples of $d$-manifolds whose tangent bundle admits size-$(d+1)$ Parseval frames: orientable surfaces, 4-manifolds with additional restrictions on their characteristic classes, certain lens spaces (of arbitrarily high dimension), and homology spheres. On the other hand, we use Theorem \ref{thm:stably_parallel} to give  examples of $d$-manifolds which \emph{do not} admit Parseval frames of size $d+1$ for all $d > 3$ (Theorem~\ref{thm:no_d+1_frames}).

A frame $(f_j)_{j=1}^n$ in a real (resp.\ complex) Hilbert space $V$ can alternatively be thought of as a surjective linear map from $\RR^n$ (resp.\ $\CC^n$) to $V$. To such a map, one can associate its singular values, which form a collection of positive real numbers. The condition that the frame is Parseval is equivalent to all singular values being equal to 1. At the other extreme is the case where the singular values of the frame are pairwise distinct. We call such frames \emph{generic} (see Section~\ref{sec:spectra} for a precise definition)---this is warranted, since an open dense subset of frames have this property. For a vector bundle $E\to M$, we correspondingly say that a frame is \emph{generic} if it is generic at each point of $M$, although this turns out to be somewhat of a misnomer. That is, while one might naively expect that most collections of sections will form a generic frame, we show that the topology of $M$ and $E$ can provide obstructions to finding such generic frames.

\begin{theorem*}[Theorem \ref{thm:obstruction_to_generic_sections}]
Let $E\to M$ be an oriented rank-$k$ vector bundle over a connected $d$-manifold, endowed with a smoothly-varying family of fiberwise inner products. Suppose that $E\to M$ has non-trivial Euler class and that $H_1(M,\ZZtwo)=0$. Then $E$ does not admit sections of generic frames.
\end{theorem*}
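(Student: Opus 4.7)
The plan is to argue by contradiction. Suppose there exist sections $(\sigma_j)_{j=1}^n$ of $E$ forming a generic frame at every point of $M$. I would extract from this data a canonical splitting of $E$ into line subbundles, show each line subbundle must be trivial under the given cohomological hypothesis, and conclude that $E$ is trivial, contradicting non-vanishing of the Euler class.

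First I would form the frame operator $S\colon E\to E$ given fiberwise by $S(p)(v)=\sum_{j=1}^n \langle v,\sigma_j(p)\rangle\,\sigma_j(p)$. This is a smoothly varying family of self-adjoint positive-definite endomorphisms of the fibers, whose eigenvalues at $p$ are precisely the squares of the singular values of the frame at $p$. Genericity forces $k$ pairwise distinct positive eigenvalues at every point, which can be ordered consistently as $\lambda_1(p)>\cdots>\lambda_k(p)>0$. Standard perturbation theory for smoothly varying families of self-adjoint operators with simple spectrum (most cleanly obtained from the contour-integral formula for the spectral projectors) then produces smoothly varying rank-one projectors $P_i\colon E\to E$, and hence smooth line subbundles $L_i\subset E$ realizing a direct-sum decomposition $E=L_1\oplus\cdots\oplus L_k$.

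Next I would invoke the hypothesis on $M$. Real line bundles over $M$ are classified up to isomorphism by $w_1\in H^1(M,\ZZtwo)$, so it suffices to show that this group vanishes. Since $M$ is connected, the universal coefficient theorem gives $H^1(M,\ZZtwo)\cong \Hom(H_1(M,\ZZ),\ZZtwo)$, and the corresponding UCT for homology identifies $H_1(M,\ZZtwo)\cong H_1(M,\ZZ)\otimes\ZZtwo$. The hypothesis $H_1(M,\ZZtwo)=0$ thus forces $H_1(M,\ZZ)$ to be $2$-divisible, and any homomorphism from a $2$-divisible abelian group to $\ZZtwo$ is zero. Therefore $H^1(M,\ZZtwo)=0$ and each $L_i$ is trivial.

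Triviality of each $L_i$ yields a nowhere-zero section of each factor, and these sections together form a global frame of $E$, so $E$ itself is trivial. A trivial bundle has vanishing Euler class, contradicting our assumption and completing the argument. The main technical step I foresee is justifying the smooth (or at least continuous) dependence of the eigenspace line bundles on the base point; once the simple-spectrum condition is in hand, this is a routine consequence of holomorphic functional calculus applied to $S$, but the verification should be written out carefully since it is the place where the genericity assumption is really used.
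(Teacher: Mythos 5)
Your proof is correct and follows essentially the same route as the paper's: both extract line subbundles of $E$ from the simple eigenvalues of the frame operator, use $H_1(M,\ZZtwo)=0$ to force such a line bundle to be trivial, and then contradict the non-vanishing of the Euler class via the resulting nowhere-zero section of $E$. The only cosmetic difference is that the paper phrases the triviality of the eigenline bundle through its unit-sphere ($S^0$-)double cover and $\pi_1(M)\to\ZZtwo$ rather than through $w_1$ and the universal coefficient theorem, and works with a single simple eigenvalue (proving the stronger $i$-generic version) rather than the full eigenspace splitting.
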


In fact, the full statement of Theorem~\ref{thm:obstruction_to_generic_sections} demonstrates the non-existence of sections with a weaker notion of genericity. In particular, we are able to show that simply connected 4-manifolds \emph{never} admit generic frames, in this weak sense (Corollary~\ref{cor:simply_connected_4_manifolds}).

\subsection{Outline of the paper}\label{sec:outline} The main technique of this paper is to reformulate the existence of Parseval frames of size $n$ for $E\to M$ as a question about the existence of sections of a certain bundle $\P^n(E)\to M$ that we call the \emph{$n$-Parseval bundle of $E$}. Once this is accomplished, techniques from classical obstruction theory can be used to ascertain when such sections exist. More precisely, sections of $\P^n(E)$ are obstructed by certain cohomology classes on $M$ with values in the homotopy groups of the fibers of $\P^n(E)$. In Section~\ref{sec:frames_in_vector_spaces} we describe the space of all Parseval frames of fixed size in a (finite-dimensional) Hilbert space. Here we observe the well known fact that the space of Parseval frames is diffeomorphic to a \emph{Stiefel manifold} (see, e.g.,~\cite{strawn2011finite}). This is a useful perspective to take with a view toward generalizing to vector bundles; indeed, this Stiefel manifold will end up being the fiber of the Parseval bundle mentioned above. Its low-dimensional homotopy groups are well known, and this knowledge is crucial for us as we attempt to analyze the obstructions mentioned above. In Section~\ref{sec:frames_on_vector_bundles} we describe the construction of the Parseval bundle in detail. In Section~\ref{sec:existence_of_frames} we prove our general existence theorems by showing that the relevant obstructions to finding sections all vanish, and then study the tangent bundle case in more detail through the lens of stable parallelizability. Finally, in Section~\ref{sec:spectra} we define the notion of \emph{spectral covers} and use them to show that in certain situations it is not possible to find generic frames. 

\subsection*{Acknowledgements} The authors would like to thank Eric Klassen for suggesting the proof of Proposition~\ref{prop:Parseval_hom_eq_to_frames} and Ettore Aldrovandi for several useful discussions regarding Whitehead towers and obstructions. This research was partially supported by the National Science Foundation (DMS--2107808, Tom Needham; DMS--2107700, Clayton Shonkwiler). 

\section{Frames in vector spaces}\label{sec:frames_in_vector_spaces}

Before discussing frames on vector bundles, we begin with a treatment of frames in vector spaces. In anticipation of migrating these spaces to the setting of vector bundles, we focus on describing spaces of frames as homogeneous spaces of various Lie groups.

\subsection{The space of frames}\label{subsec:real_case}
Let $V$ be a $k$-dimensional real vector space. 

\begin{definition}[Frame]
    For $n \geq k$, an \define{$n$-frame in $V$} is a surjective linear map $A\in \Hom(\RR^n,V)$. The space of all $n$-frames in $V$ will be denoted $\F^n(V)$. 
\end{definition}

\begin{remark}\label{rmk:n_frame_definition}
   The image of the standard basis of $\RR^n$ under a frame $A$ then provides a spanning set for $V$. Moreover, any spanning sequence of $n$ vectors can be realized in this manner, so we see that this definition of a frame is equivalent to the one given in the introduction. We also observe there that if $n=k$, then a frame is equivalent to a choice of basis for $V$.
\end{remark}
 
Let us describe the basic topological structure of $\F^n(V)$. The standard topology on $\Hom(\RR^n,V)$ is homeomorphic to the Euclidean topology on the vector space of $k\times n$ matrices; $\F^n(V)$ is a dense open subset of $\Hom(\RR^n,V)$, and we endow it with the induced smooth structure. Moreover, $\F^n(V)$ is diffeomorphic to a familiar  space: let $\St_k^{\mathrm{nc}}(\RR^n)$ denote the \define{non-compact Stiefel manifold}, consisting of all sets of $k$ linearly independent vectors in $\RR^n$. 

\begin{proposition}\label{prop:non-compact_stiefel}
    The space of $n$-frames $\F^n(V)$ is diffeomorphic to the non-compact Stiefel manifold $\St_k^{\mathrm{nc}}(\RR^n)$. 
\end{proposition}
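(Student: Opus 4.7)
The plan is to construct an explicit diffeomorphism via duality. The key observation is that a surjective linear map $\RR^n \to V$, upon transposing, becomes an injective linear map $V \to \RR^n$ (using the standard basis to identify $\RR^n$ with its dual), and picking a basis of $V$ turns this injection into a $k$-tuple of linearly independent vectors in $\RR^n$, i.e.\ a point of $\St_k^{\mathrm{nc}}(\RR^n)$.

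More precisely, I would first choose a basis of $V$, giving an identification $V \cong \RR^k$ and hence $\Hom(\RR^n,V) \cong \mathrm{Mat}_{k \times n}(\RR)$ as real vector spaces. Under this identification, an element $A \in \F^n(V)$ corresponds to a $k \times n$ matrix of rank $k$. Next, I would define the candidate map $\Phi \colon \F^n(V) \to \St_k^{\mathrm{nc}}(\RR^n)$ sending such a matrix to its ordered tuple of rows, viewed as vectors in $\RR^n$. That $\Phi$ is a bijection onto $\St_k^{\mathrm{nc}}(\RR^n)$ follows from the elementary linear algebra fact that a $k \times n$ matrix has rank $k$ if and only if its rows are linearly independent, with inverse sending a tuple of linearly independent vectors to the matrix having those vectors as rows.

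For smoothness, I would note that the rank-$k$ matrices form an open subset of $\mathrm{Mat}_{k \times n}(\RR)$ and the linearly independent $k$-tuples form an open subset of $(\RR^n)^k$, and that $\Phi$ together with its inverse is just the restriction to these open subsets of the obvious linear isomorphism $\mathrm{Mat}_{k \times n}(\RR) \cong (\RR^n)^k$ that reads off the rows of a matrix. A linear isomorphism restricted to an open set is automatically a diffeomorphism onto its image, which yields the desired result.

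The main obstacle is not truly an obstacle---the proof is essentially a matter of unwinding definitions and selecting the right viewpoint. The most delicate point is keeping conventions (rows versus columns, primal versus dual) consistent so that surjectivity really does translate into linear independence of the resulting tuple; beyond that bookkeeping, everything reduces to standard facts about rank and open conditions in finite-dimensional vector spaces.
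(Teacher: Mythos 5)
Your proposal is correct and follows essentially the same route as the paper: after fixing a basis of $V$, reading off the rows of the matrix of $A$ is exactly the paper's transpose map $A \mapsto A^T$ written in coordinates, and both arguments conclude by observing that this is the restriction of a linear isomorphism to the open locus of rank-$k$ maps. No gaps.
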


\begin{proof}
    Fix an inner product $\langle \cdot, \cdot \rangle$ on $V$. For $A \in \F^n(V)$, let $A^T \in \Hom(V,\RR^n)$ be the transpose of $A$ with respect to $\langle \cdot, \cdot \rangle$ and the standard inner product on $\RR^n$. Since $\mathrm{rank}(A) = \mathrm{rank}(A^T)$, the map $A^T$ is injective. Fixing a choice of basis for $V$, the image of this basis under $A^T$ gives an element of $\St_k^{\mathrm{nc}}(\RR^n)$, and any element can be realized in this way. We can therefore identify elements of $\St_k^{\mathrm{nc}}(\RR^n)$ with linear maps $A^T$, and the map $\F^n(V) \to \St_k^{\mathrm{nc}}(\RR^n)$ defined by $A \mapsto A^T$ gives the desired diffeomorphism.
\end{proof}

Let $\GL(W)$ denote the Lie group of invertible linear transformations of a vector space $W$. When ${W = \RR^n}$, we write $\GL(n) = \GL(\RR^n)$, and we identify $\GL(k) \cong \GL(V)$ for the $k$-dimensional vector space $V$ by picking bases. We now realize $\F^n(V)$ as a homogeneous space in a somewhat non-standard way. The group ${\hat G \coloneqq \GL(k)\times \GL(n)}$ acts on $\F^n(V)$ via $(M,N)\cdot A=MAN^{-1}$. The existence of the Smith normal form shows that $\hat G$ acts transitively on $\F^n(V)$ and gives a preferred representative of the orbit as the linear map which can be written with respect to some choice of bases as the block matrix
\begin{equation}\label{eqn:E_matrix}
\bE=\begin{pmatrix}
    \bI_{k}& \mathbf{0}_{k,n-k}
\end{pmatrix},
\end{equation}
where $\bI_k$ and $\mathbf{0}_{k,n-k}$ are identity and zero matrices of the corresponding sizes, respectively. The stabilizer of the linear map associated to $\bE$ is  $\hat H = \GL(k) \times \GL(n-k)$. We have proved the following.

\begin{proposition}\label{prop:frames_homogeneous_space}
    With the notation above, $\F^n(V) \cong \hat G/\hat H = (\GL(k) \times \GL(n))/(\GL(k) \times \GL(n-k))$.
\end{proposition}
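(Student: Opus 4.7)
The plan is to establish the identification $\F^n(V) \cong \hat G/\hat H$ by executing the two steps already sketched by the authors: (i) transitivity of the $\hat G$-action on $\F^n(V)$, and (ii) identification of the stabilizer of the chosen basepoint $\bE$ with $\hat H = \GL(k)\times \GL(n-k)$. Both steps are in principle elementary linear algebra, and once they are in hand the desired diffeomorphism follows from the standard fact that a transitive smooth Lie group action on a manifold realizes the manifold as a quotient by any point stabilizer. Well-definedness of the action is immediate since surjectivity of $A$ is preserved under pre- and post-composition with isomorphisms, and associativity follows from $(MM')A(NN')^{-1} = M(M'AN'^{-1})N^{-1}$.

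For transitivity, I would fix bases so that an arbitrary $A\in \F^n(V)$ becomes a $k\times n$ matrix of full row rank. Since $A$ has rank $k$, some choice of $k$ columns is linearly independent; after applying a permutation matrix (absorbed into $N^{-1}$) I may assume the first $k$ columns form an invertible $k\times k$ matrix $B$ and write $A = (B \mid C)$. Then $B^{-1}A = (\bI_k \mid B^{-1}C)$, and right-multiplication by the upper-triangular block matrix $\left(\begin{smallmatrix} \bI_k & -B^{-1}C \\ 0 & \bI_{n-k}\end{smallmatrix}\right)$ kills the right block and produces $\bE$. Composing these operations gives an explicit $(M,N)\in \hat G$ with $(M,N)\cdot A = \bE$, which is really just a concrete version of the Smith normal form argument the paper invokes.

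For the stabilizer, the condition $M\bE N^{-1}=\bE$, rearranged as $M\bE = \bE N$, translates into a block equation. Writing $N=\left(\begin{smallmatrix} N_{11} & N_{12} \\ N_{21} & N_{22}\end{smallmatrix}\right)$ with $N_{11}$ of size $k\times k$, the left-hand side equals $(M\mid \mathbf{0})$ while the right-hand side equals $(N_{11}\mid N_{12})$. Matching blocks forces $N_{11}=M$ and $N_{12}=0$, and invertibility of $N$ then forces $N_{22}\in \GL(n-k)$. Reading this off, $\Stab(\bE)$ is identified with $\hat H$, with $M$ giving the $\GL(k)$-factor and $N_{22}$ giving the $\GL(n-k)$-factor.

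There is no substantive obstacle: everything reduces to mechanical matrix computation together with one invocation of standard Lie theory. The only point requiring a little care is parsing the block form of $N$ in the stabilizer computation and correctly reading off the identification with $\GL(k)\times \GL(n-k)$; a quick dimension count ($\dim \F^n(V) = nk$) provides a useful consistency check on the final answer.
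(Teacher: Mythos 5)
Your transitivity argument is fine and is essentially the Smith-normal-form computation the paper invokes, made explicit. The gap is in the stabilizer step: the equation $M\bE=\bE N$ forces $N_{11}=M$ and $N_{12}=0$, but it places \emph{no} constraint on the block $N_{21}$, which your computation silently drops. The stabilizer of $\bE$ is therefore
\[
\Stab(\bE)=\left\{\left(M,\begin{pmatrix} M & 0\\ P & Q\end{pmatrix}\right) \;:\; M\in\GL(k),\ Q\in\GL(n-k),\ P \text{ arbitrary}\right\}
\cong\bigl(\GL(k)\times\GL(n-k)\bigr)\ltimes\RR^{k(n-k)},
\]
which strictly contains the block-diagonal copy of $\hat H$ whenever $n>k$. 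The dimension count you yourself propose as a consistency check detects this: $\dim(\hat G/\hat H)=n^2-(n-k)^2=k(2n-k)$, which exceeds $\dim\F^n(V)=nk$ by $k(n-k)$, whereas quotienting by the full stabilizer above gives $nk$ as required.

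To be fair, the imprecision originates in the paper itself, whose discussion preceding the proposition simply asserts that the stabilizer is $\hat H$; the same issue afflicts the familiar claim $\St_k^{\mathrm{nc}}(\RR^n)\cong\GL(n)/\GL(n-k)$ of Remark \ref{rem:alternative_homogeneous_structure}, which is a homotopy equivalence rather than a diffeomorphism. The literally correct conclusion of your argument is $\F^n(V)\cong\hat G/\Stab(\bE)$ with $\Stab(\bE)$ the parabolic-type subgroup displayed above; since that subgroup deformation retracts onto $\hat H$, the proposition holds up to homotopy equivalence, which is all that is used later in the paper. Note that the orthogonal analogue (Proposition \ref{prop:parseval_homogeneous}) is literally correct, because once $N_{12}=0$, orthogonality of $N$ forces $N_{21}=0$ as well. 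To establish the proposition exactly as stated you would need either to take $\hat H$ to be the full stabilizer or to make precise the weakened sense in which $\cong$ is intended.
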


\begin{remark}\label{rem:alternative_homogeneous_structure}
    It is well-known that $\St_k^{\mathrm{nc}}(\RR^n)$ can be realized as the homogeneous space $\GL(n)/\GL(n-k)$, and Proposition \ref{prop:non-compact_stiefel} implies that $\F^n(V) \cong \GL(n)/\GL(n-k)$ as well.
    However, it will be useful for us to consider the alternative homogeneous space structure of Proposition \ref{prop:frames_homogeneous_space}, which is apparently somewhat redundant. This perspective will prove to be convenient when we transition to the vector bundle setting. The basic idea is that the group $\hat G$ is the structure group of a vector bundle with fibers of the form $\Hom(\RR^n,V)$, so the realization of $\F^n(V)$ via the action of $\hat G$ becomes relevant when constructing a certain associated bundle; see Section \ref{sec:parseval_bundle} below for details.
\end{remark}

\subsection{The space of Parseval frames.}
Next, suppose that $V$ has been endowed with an inner product. We will abuse notation and use $\brak{\cdot,\cdot}$ for both the inner product on $V$ and the standard inner product on $\RR^n$. By taking the composition of $A$ and its transpose, we define the \define{frame operator} of $A$, $AA^T\in \End(V)$, where $\End(V)$ is the space of linear maps from $V$ to itself. If $A$ is a frame then the frame operator is invertible, so $AA^T\in \GL(V)$. For reasons described in the introduction, a case of particular interest is when $AA^T=I_V$, the identity operator on $V$. 

\begin{definition}[Parseval frames]
    A frame $A\in \F^n(V)$ with frame operator $AA^T$ equal to the identity is called a \define{Parseval frame}. More generally, if $AA^T=cI_V$ for some $c>0$ then we say that $A$ is a \define{tight frame}. We will will denote the space of $n$-Parseval frames as $\mathcal{P}^n(V)$. 
\end{definition}

Similar to Proposition \ref{prop:non-compact_stiefel}, the space of Parseval frames can be identified with a familiar space: let $\St_k(\RR^n)$ denote the \define{(compact) Stiefel manifold}, consisting of all sets of $k$ pairwise orthogonal unit vectors in~$\RR^n$. The following fact is familiar to the frame theory community (see, e.g.,~\cite{strawn2011finite,needham2021symplectic}), and is proved by the same method as Proposition \ref{prop:non-compact_stiefel}; that is, via the transpose map.

\begin{proposition}\label{prop:Parseval_same_as_Stiefel}
    The space of Parseval frames $\P^n(V)$ is diffeomorphic to the Stiefel manifold $\St_k(\RR^n)$.
\end{proposition}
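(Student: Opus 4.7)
The plan is to mimic the proof of Proposition~\ref{prop:non-compact_stiefel} via the transpose map, but additionally verify that the Parseval condition $AA^T = I_V$ translates precisely into the orthonormality condition required for membership in the compact Stiefel manifold.

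First I would fix an orthonormal basis $\{e_1,\dots,e_k\}$ of $V$. For $A\in \P^n(V)$, the transpose $A^T: V \to \RR^n$ is defined via the given inner products. The Parseval condition $AA^T = I_V$ is the only new ingredient beyond the previous proposition; its crucial consequence is that $A^T$ is an \emph{isometric} embedding. Indeed, for any $v,w \in V$, one computes
\[
\langle A^T v, A^T w\rangle_{\RR^n} = \langle v, A A^T w\rangle_V = \langle v, w\rangle_V.
\]
In particular, $\{A^T e_1,\dots, A^T e_k\}$ is an orthonormal set in $\RR^n$, hence an element of $\St_k(\RR^n)$.

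Next I would exhibit the inverse. Given any orthonormal $k$-tuple $(u_1,\dots,u_k) \in \St_k(\RR^n)$, define $B: V \to \RR^n$ on the basis by $B(e_i) = u_i$ and extend linearly; then set $A := B^T \in \Hom(\RR^n, V)$. An analogous calculation to the above shows $AA^T = I_V$, so $A \in \P^n(V)$. These two constructions are mutually inverse and are both restrictions of linear (hence smooth) operations on the ambient matrix spaces, so they descend to a diffeomorphism between the smooth submanifolds $\P^n(V) \subset \F^n(V)$ and $\St_k(\RR^n) \subset (\RR^n)^k$.

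I do not anticipate any serious obstacles: the argument is essentially the same as for Proposition~\ref{prop:non-compact_stiefel}, with the lone additional observation being the equivalence between $AA^T = I_V$ and $A^T$ being an isometry. The only minor care needed is to make sure the map and its inverse are genuinely smooth, which follows because everything in sight is algebraic in the entries of the relevant matrices.
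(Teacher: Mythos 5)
Your proof is correct and follows exactly the route the paper intends: the paper simply remarks that the result is proved ``by the same method as Proposition~\ref{prop:non-compact_stiefel}, that is, via the transpose map,'' and your argument fills in precisely that outline, with the additional (correct) observation that $AA^T = I_V$ is equivalent to $A^T$ being an isometric embedding. No gaps.
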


\begin{remark}\label{rmk:parseval_orthonormal_basis}
    A simple observation is that if $n = k$, then Parseval frames are equivalent to orthonormal bases, since $\St_k(\RR^k)$ is, by definition, the space of orthonormal bases for $\RR^k \cong V$. 
\end{remark}

Let $\iota: \P^n(V)\hookrightarrow \F^n(V)$ denote the inclusion map.

\begin{proposition}\label{prop:Parseval_hom_eq_to_frames}
    The inclusion $\iota:\P^n(V)\hookrightarrow \F^n(V)$ is a homotopy equivalence
\end{proposition}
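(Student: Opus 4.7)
The plan is to exhibit an explicit strong deformation retraction of $\F^n(V)$ onto $\P^n(V)$, built from the polar decomposition of a frame. After fixing an inner product on $V$, the key observation is that for any $A \in \F^n(V)$ the frame operator $AA^T \in \End(V)$ is positive definite: indeed, for $v \neq 0$, surjectivity of $A$ forces $A^T v \neq 0$, so $\langle AA^T v, v\rangle = \|A^T v\|^2 > 0$. Hence $(AA^T)^{1/2}$ is a well-defined positive definite symmetric operator depending continuously on $A$ (continuity of the square root on the open cone of positive definite operators).

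First I would define a candidate retraction
\[
r:\F^n(V)\to \P^n(V), \qquad r(A)=(AA^T)^{-1/2}A,
\]
and check the three required properties: (i) $r(A)$ is Parseval, since
\[
r(A)\, r(A)^T = (AA^T)^{-1/2}\,(AA^T)\,(AA^T)^{-1/2}=I_V;
\]
(ii) $r$ is continuous, which follows from continuity of inversion and of the square root; and (iii) $r\circ \iota=\mathrm{id}_{\P^n(V)}$, because $A\in\P^n(V)$ gives $AA^T=I_V$ and therefore $r(A)=A$.

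Next I would construct a homotopy $\iota\circ r \simeq \mathrm{id}_{\F^n(V)}$ by straight-line interpolating the positive-definite ``polar factor'' $P(A)\coloneqq (AA^T)^{1/2}$ with the identity. Define
\[
H:\F^n(V)\times [0,1]\to \Hom(\RR^n,V), \qquad H(A,t)=\bigl((1-t)P(A)+tI_V\bigr)\,r(A).
\]
One checks $H(A,0)=P(A)r(A)=A$ and $H(A,1)=r(A)$. The crucial point is that $H(A,t)$ actually lands in $\F^n(V)$: the convex combination of two positive definite operators is again positive definite, hence invertible, so $H(A,t)$ is the composition of an invertible operator on $V$ with the surjection $r(A)$, and is therefore still surjective. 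Moreover, when $A\in\P^n(V)$ we have $P(A)=I_V$, so $H(A,t)=A$ for all $t$, making this a strong deformation retraction and implying that $\iota$ is a homotopy equivalence with homotopy inverse $r$.

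There is no serious obstacle here; the only subtlety is verifying that the interpolation stays inside $\F^n(V)$, which reduces to the (elementary) positive-definiteness check above. I would note in passing that this argument is just the polar-decomposition homotopy equivalence $\GL(k)\simeq \O(k)$ packaged at the level of the homogeneous-space descriptions in Proposition~\ref{prop:non-compact_stiefel} and Proposition~\ref{prop:Parseval_same_as_Stiefel}, but the direct construction above has the advantage of producing a canonical formula that will generalize fiberwise when we pass to vector bundles in Section~\ref{sec:frames_on_vector_bundles}.
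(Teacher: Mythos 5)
Your proof is correct and takes essentially the same route as the paper: both use the polar decomposition $A = (AA^T)^{1/2}\,B$ with the retraction $A \mapsto (AA^T)^{-1/2}A$ and a straight-line homotopy of the positive-definite factor to the identity. Your writeup is if anything slightly more explicit about why the interpolation stays inside $\F^n(V)$ and why the retraction is strong, but there is no substantive difference in method.
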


\begin{proof}
In light of Propositions \ref{prop:non-compact_stiefel} and \ref{prop:Parseval_same_as_Stiefel}, this is equivalent to the well-known fact that the inclusion $\St_k(\RR^n) \hookrightarrow \St_k^{\mathrm{nc}}(\RR^n)$ is a homotopy equivalence. In particular, this can be proved using the Gram--Schmidt orthonormalization algorithm. For the convenience of the reader, we give details of the proof, working directly in the spaces of frames.

The idea behind the proof is an application of the polar decomposition from linear algebra, but we include the details for completeness. Let $A\in \F^n(V)$, so that the frame operator $AA^T$ is symmetric and positive-definite.

Let $P$ be the unique positive-definite symmetric square root of $AA^T$, and let $B=P^{-1}A\in \F^n(V)$. Computing, we find that 
$$BB^T=P^{-1}AA^T(P^{-1})^T=P^{-1}AA^TP^{-1}=I,$$
where the last equality follows since $AA^T=P^2$. Thus $B\in \P^n(V)$ and we can write $A=PB$, so each frame can be decomposed as the product of a positive-definite symmetric operator and a Parseval frame. Furthermore, if $A=P'B'$ is a different such decomposition, then $AA^T=P'B'(B')^TP'=(P')^2$, and so $P'$ is a positive-definite square root of $AA^T$. By uniqueness of such square roots we see that $P=P'$ and hence $B=B'$ and this decomposition is unique. 

Next, define $f:\F^n(V)\to \P^n(V)$ by $f(A)=f(PB)=B$. It is easy to see that $f$ is a left inverse of $\iota$. Furthermore, the space of positive-definite symmetric matrices is a convex subset of the vector space $\End(V)$, so taking the straight-line homotopy to the identity endomorphism exhibits a homotopy from $\iota\circ f$ to the identity map on $\F^n(V)$. Thus $f$ is a homotopy inverse of $\iota$. 
\end{proof}

\subsection{Geometric structure of \texorpdfstring{$\P^n(V)$}{Parseval space}} Next, we provide an alternative and more geometric description of Parseval frames. Let $A \in \F^n(V)$ and consider the map $A^T:V \to \RR^n$. For all $v,w \in V$, we have
$
\brak{A^T v, A^T w} = \brak{AA^T v, w},
$
and it follows easily that $A$ is Parseval if and only if $A^T$ is an isometric embedding. Slightly more generally, we have the following proposition (which is well known---see, e.g.,~\cite[Section~2.1]{waldron2018introduction}).

\begin{proposition}\label{prop:parseval_tight_frame_condition}
A frame $A\in \F^n(V)$ is tight if and only if $A^T:V\to \RR^n$ is an isometric embedding followed by a homothety. Moreover, the frame is Parseval if and only if the homothety is trivial.  
\end{proposition}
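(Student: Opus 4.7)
The plan is to work directly from the adjoint identity $\langle A^T v, A^T w\rangle_{\RR^n} = \langle v, AA^T w\rangle_V$, which holds for all $v,w \in V$ by definition of the transpose. This identity immediately reduces the two claims to a comparison between the Gram form of $A^T$ and the frame operator $AA^T$, which is the core of the argument.

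First I would establish the forward direction. If $A$ is tight, so that $AA^T = c I_V$ for some $c > 0$, substitute into the adjoint identity to get $\langle A^T v, A^T w\rangle = c \langle v, w\rangle$ for all $v,w \in V$. Setting $\lambda = \sqrt{c}$ and defining $U \coloneqq \lambda^{-1} A^T : V \to \RR^n$, a short calculation shows $\langle Uv, Uw\rangle = \langle v,w\rangle$, so $U$ is an isometric embedding and $A^T = \lambda \cdot U$ is $U$ followed by the homothety $x \mapsto \lambda x$ of $\RR^n$. In particular, if $A$ is Parseval then $c = 1$ forces $\lambda = 1$, so the homothety is trivial and $A^T$ itself is an isometric embedding.

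Next I would handle the converse. Suppose $A^T = \lambda \cdot U$ where $U : V \to \RR^n$ is an isometric embedding and $\lambda > 0$ is the ratio of the homothety. Then $\langle A^T v, A^T w\rangle = \lambda^2 \langle v,w\rangle$, so by the adjoint identity $\langle v, AA^T w\rangle_V = \lambda^2 \langle v, w\rangle_V$ for all $v \in V$. Since this holds for every $v$, nondegeneracy of the inner product forces $AA^T = \lambda^2 I_V$, so $A$ is tight with constant $c = \lambda^2$; if the homothety is trivial, then $\lambda = 1$ and $A$ is Parseval.

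There is no real obstacle here beyond bookkeeping: the entire argument is the observation that the frame operator $AA^T$ on $V$ is the Gram operator of $A^T$, so proportionality of $AA^T$ to the identity is literally the condition that $A^T$ scales inner products by a constant. The only point to be slightly careful about is the order of composition in the statement ``isometric embedding followed by a homothety,'' which just fixes whether the homothety acts on $V$ or on $\RR^n$; either convention gives the same conclusion because a homothety on $V$ of ratio $\lambda$ composed with an isometric embedding agrees with that isometric embedding composed with the homothety of $\RR^n$ of the same ratio.
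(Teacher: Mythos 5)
Your proof is correct and follows exactly the route the paper takes: the paper states the adjoint identity $\brak{A^T v, A^T w} = \brak{AA^T v, w}$ immediately before the proposition and notes that the claim ``follows easily,'' which is precisely the comparison of the Gram form of $A^T$ with the frame operator that you carry out in detail. Nothing is missing; you have simply written out the bookkeeping the paper leaves implicit.
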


As an application of Proposition~\ref{prop:parseval_tight_frame_condition}, we will show that Parseval frames can be realized as a homogeneous space. Let $\O(W)$ denote the Lie group of orthogonal linear transformations of an inner product space $W$. If $W = \RR^n$ with its standard inner product, we write $\O(n) = \O(\RR^n)$. Moreover, for our $k$-dimensional inner product space $V$, we identify $\O(k) \cong \O(V)$ by choosing bases. In light of Proposition \ref{prop:parseval_tight_frame_condition}, the group $G\coloneqq\O(k) \times \O(n)$ acts  on $\P^n(V)$ by $(M,N)\cdot A=MAN^{-1}=MAN^T$ (i.e., the restriction of the action of $\hat G$ on $\F^n(V)$), and it is straightforward to show that this action is transitive with stabilizer subgroup isomorphic to $H\coloneqq\O(k)\times \O(n-k)$. This proves the following.

\begin{proposition}\label{prop:parseval_homogeneous}
    With the notation above, $\P^n(V) \cong G/H = (\O(k) \times \O(n))/(\O(k) \times \O(n-k))$.
\end{proposition}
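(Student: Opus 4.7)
The plan is to mirror the argument used for Proposition~\ref{prop:frames_homogeneous_space} but adapted to the orthogonal setting, and to use Proposition~\ref{prop:parseval_tight_frame_condition} as the key tool for establishing transitivity. The statement has three pieces to verify: the $G$-action preserves $\P^n(V)$, it is transitive, and the stabilizer of a chosen basepoint is isomorphic to $H$.

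First I would check that the action is well defined. Given $A\in\P^n(V)$ and $(M,N)\in G$, a direct computation using $N^{-1}=N^T$, $M^{-1}=M^T$, and $AA^T=I_V$ gives
\[
(MAN^{-1})(MAN^{-1})^T = MAN^{-1}NA^TM^T = MAA^TM^T = MM^T = I_V,
\]
so $MAN^{-1}\in\P^n(V)$ as required. I would take as the basepoint the same matrix $\bE$ from \eqref{eqn:E_matrix}; since $\bE^T$ is the standard inclusion of $\RR^k$ into $\RR^n$, it is an isometric embedding, and so $\bE\in\P^n(V)$ by Proposition~\ref{prop:parseval_tight_frame_condition}.

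For transitivity, let $A\in \P^n(V)$. By Proposition~\ref{prop:parseval_tight_frame_condition}, $A^T\colon V\to\RR^n$ is an isometric embedding. Fix an orthonormal basis $v_1,\dots,v_k$ of $V$ (the same basis implicit in identifying $\O(V)\cong\O(k)$). Then $A^Tv_1,\dots,A^Tv_k$ is an orthonormal $k$-tuple in $\RR^n$, which I can extend to an orthonormal basis $A^Tv_1,\dots,A^Tv_k, u_{k+1},\dots,u_n$ of $\RR^n$. Define $N\in\O(n)$ so that $N^Te_i = A^Tv_i$ for $i\le k$ and $N^Te_i = u_i$ for $i>k$. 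Then $N^T\bE^T = A^T$, equivalently $\bE N^{-1} = A$, so $(I_V, N)\cdot\bE = A$. This shows $G$ acts transitively.

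Finally I would compute $\Stab(\bE)$ directly by a block matrix argument, which I expect to be the only moderately fiddly step. Writing $N\in\O(n)$ in block form with $(k,n-k)$ splitting, the equation $M\bE = \bE N$ forces the upper-left block of $N$ to be $M$ and the upper-right block to vanish. Orthogonality of $N$ then forces the lower-left block to vanish as well, leaving $N=\mathrm{diag}(M,S)$ with $S\in\O(n-k)$ arbitrary. Hence the stabilizer consists of pairs $(M,\mathrm{diag}(M,S))$ with $M\in\O(k)$ and $S\in\O(n-k)$, which is a subgroup of $G$ isomorphic to $H=\O(k)\times\O(n-k)$. The identification $\P^n(V)\cong G/H$ then follows from the orbit–stabilizer theorem, noting that since $G$ is compact and acts smoothly on a manifold, the orbit map descends to a diffeomorphism $G/H\to\P^n(V)$.
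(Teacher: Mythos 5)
Your proof is correct and fills in exactly the verification the paper leaves implicit: the paper merely asserts, citing Proposition~\ref{prop:parseval_tight_frame_condition}, that the restriction of the $\hat G$-action to $G$ preserves $\P^n(V)$, acts transitively, and has stabilizer of $\bE$ isomorphic to $\O(k)\times\O(n-k)$, and your well-definedness computation, orthonormal-extension argument, and block-matrix stabilizer calculation are precisely the ``straightforward'' details being invoked. One trivial bookkeeping point: from $N^T\bE^T=A^T$ you get $\bE N=A$, i.e.\ $(I_V,N^{-1})\cdot\bE=A$ rather than $\bE N^{-1}=A$, which of course still establishes transitivity.
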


\begin{remark}
    This remark parallels Remark \ref{rem:alternative_homogeneous_structure}. It is well known that the Stiefel manifold $\St_k(\RR^n)$ is diffeomorphic to the homogeneous space ${\O(n)/\O(n-k)}$, where $\O(n)$ denotes the Lie group of orthogonal linear transformations. In light of the above discussion, we have 
    $
    \P^n(V) \cong \O(n)/\O(n-k)
    $
    as well. Similar to the situation with $\F^n(V)$, the homogeneous space structure described in Proposition \ref{prop:parseval_homogeneous} gives a useful perspective when constructing an associated bundle with fiber $\P^n(V)$.
\end{remark}

\subsection{Complex frame spaces.}\label{subsec:complex_case}

Much of the above discussion applies equally well to the case where $V$ is a finite-dimensional complex vector space. In this section we summarize the analogous results in this setting. Suppose that $V$ has complex dimension $k$. Then we define an \define{$n$-frame on $V$} to be a surjective element of $\Hom(\CC^n,V)$ and we continue to denote the space of $n$-frames by $\F^n(V)$. If we let $\hat G=\GL(\CC^k)\times \GL(\CC^n)$, then we find that once again $\hat G$ acts transitively on $\F^n(V)$ and that $\F^n(V)\cong \hat G/\hat H$, where $\hat H = \GL(\CC^k)\times \GL(\CC^{n-k})$ is the subgroup of $\hat G$ that stabilizes $\bE$, as defined in \eqref{eqn:E_matrix}. 

Next, suppose that $V$ is equipped with a positive-definite Hermitian form, which we again denote by $\langle \cdot,\cdot \rangle$. If we equip $\CC^n$ with the standard Hermitian form, then for $A\in \Hom(\CC^n,V)$ we denote the conjugate transpose with respect to these Hermitian forms as $A^\ast\in \Hom(V,\CC^n)$. As before, we define the \define{frame operator} of $A$ to be $AA^\ast\in \End(V)$ and we say that a frame is \define{Parseval} if the frame operator is the identity and \define{tight} if its frame operator is a scalar multiple of the identity. We again let $\P^n(V)$ denote the space of all Parseval frames of size $n$ on $V$.

As before, the map $A\mapsto A^\ast$ gives a diffeomorphism between $\P^n(V)$ and the space of linear maps ${V \to \CC^n}$ which preserve the corresponding Hermitian forms. Furthermore, if we let $\St_k(\CC^n)$ be the space of orthonormal subsets of $k$ unit vectors in $\CC^n$, then there is a diffeomorphism between $\P^n(V)$ and $\St_k(\CC^n)$. As a homogeneous space, we have $\St_k(\CC^n)\cong \U(n)/\U(n-k)$, where $\U(n)$ is the group of unitary operators on $\CC^n$. However, we instead consider $\P^n(V)$ as the homogeneous space $(\U(k) \times \U(n))/(\U(k) \times \U(n-k))$.

\section{Frames on vector bundles}\label{sec:frames_on_vector_bundles}

We will now generalize the notion of frames and Parseval frames from vector spaces to vector bundles. Throughout this section we let $M$ be a $d$-dimensional manifold and $E\to M$ a rank-$k$ orientable real vector bundle over $M$ (the complex case is treated in Section \ref{sec:complex_vector_bundles}). The fiber of the bundle over $p \in M$ will be denoted $E_p$. We define an \define{$n$-frame for $E$} to be a collection of sections $\sigma_i:M\to E$, $1\leq i\leq n$, so that for each $p\in M$ the collection $\{\sigma_i(p)\}_{i=1}^n$ spans $E_p$. There is an equivalent, but more conceptually convenient, definition of an $n$-frame for $E$ as a section of an associated fiber bundle, which we now describe.

\subsection{The frame and Parseval bundles.}\label{sec:parseval_bundle} Choosing $n$ sections of $E$ is equivalent to choosing \emph{a single} section of the vector bundle $\Hom(\RR^n,E)\coloneqq\Hom(M\times \RR^n,E)$---that is, the bundle whose fiber over $p \in M$ is the space of linear maps $\RR^n \to E_p$. In general, $\Hom(\RR^n,E)$ is a $(\GL(k)\times \GL(n))$-bundle; however, since we have assumed $E$ is orientable, we can reduce the structure group to $\SL(k)\times \SL(n)$, where $\SL(n)$ denotes the special linear group of operators on $\RR^n$. One advantage of this reduction is that the structure group will be connected and allows us to avoid certain technicalities arising from disconnected structure groups. The group $\SL(k)\times \SL(n)$ acts faithfully on $\F^n(\RR^k)$ (see Proposition \ref{prop:frames_homogeneous_space} and the surrounding discussion), so we can form the associated bundle to $\Hom(\RR^n,E)$ with fiber $\F^n(\RR^k)$.

\begin{definition}[Frame bundle]
    The bundle associated to  $\Hom(\RR^n,E)$ with fiber $\F^n(\RR^k)$ is called the \define{$n$-frame bundle of $E$}, and is denoted $\F^n(E)$. A section  $\sigma:M \to \F^n(E)$ is called an \define{$n$-frame on $E$}.
\end{definition}

\begin{remark}
    By the above discussion, a section of $\F^n(E)$ corresponds to a choice of $n$ sections of $E$ that pointwise span the fibers. Notice that $\F^n(E)$ is no longer a vector bundle, so there is no guarantee that sections exist. 
\end{remark}

\begin{remark}\label{rmk:k_frame_bundle}
    In classical differential topology, the \define{frame bundle} of an orientable rank-$k$ vector bundle $E \to M$ is the associated principal $\SL(k)$-bundle. The fiber of the frame bundle over $p \in M$ is the collection of bases for the vector space $E_p$. In our terminology, this is equivalent to the $k$-frame bundle $\F^k(E)$ (see Remark \ref{rmk:n_frame_definition}). 
\end{remark}

Next, assume that $E$ has been endowed with a choice of inner product on each fiber $E_p$ which varies smoothly in $p$. We will refer to such a vector bundle as a \define{Riemannian vector bundle} and the choice of fiberwise inner products as a \define{Riemannian structure on $E$}. A Riemannian structure on $E$ corresponds uniquely to a reduction of the structure group of $E$ from $\SL(k)$ to the special orthogonal group $\SO(k)$. The trivial bundle $M\times \RR^n$ admits a canonical inner product on fibers and so in this context we can reduce the structure group on $\Hom(\RR^n,E)$ to $\SO(k)\times \SO(n)$. Similar to the situation above, we have that $\SO(k)\times \SO(n)$ acts faithfully on $\P^n(\RR^k)$ (see Proposition \ref{prop:parseval_homogeneous}), so we can form the associated bundle to $\Hom(\RR^n,E)$ with fiber $\P^n(\RR^k)$. 

\begin{definition}[Parseval bundle]
    The bundle associated to $\Hom(\RR^n,E)$ with fiber $\P^n(\RR^k)$ is called the \define{$n$-Parseval bundle of $E$}, and is denoted $\P^n(E)$. A section  $\sigma:M \to \P^n(E)$ is called an \define{$n$-Parseval frame on $E$}.
\end{definition}

\begin{remark}
     A section of $\P^n(E)$ corresponds to a choice of $n$ sections of $E$ that pointwise form $n$-Parseval frames of the fibers of $E$, so that this definition agrees with the one we gave in the introduction. Again, there is no guarantee that sections of $\P^n(E)$ exist.
\end{remark} 

\begin{remark}\label{rmk:orthonormal_frame_bundle}
    Classically, the \define{orthonormal frame bundle} of an orientable rank-$k$ Riemannian vector bundle $E \to M$ is the associated principal $\SO(k)$-bundle; the fiber over $p \in M$ is the space or orthonormal bases for the inner product space $E_p$. In light of Remark \ref{rmk:parseval_orthonormal_basis}, this is equivalent to the $k$-Parseval bundle $\P^k(E)$. 
\end{remark}

\subsection{Topology and geometry of \texorpdfstring{$\P^n(V)$}{Parseval space}} Using the deformation retraction from Proposition \ref{prop:Parseval_hom_eq_to_frames} we can produce a similar deformation retraction between the Parseval and frame bundles.

 \begin{theorem}\label{thm:parseval_deformation_retract}
     There is a fiber-preserving strong deformation retract $\F^n(E) \to \P^n(E)$. 
 \end{theorem}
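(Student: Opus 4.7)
The plan is to globalize the fiberwise deformation retraction constructed in Proposition \ref{prop:Parseval_hom_eq_to_frames} by verifying that it is equivariant under the action of the structure group of $\Hom(\RR^n,E)$, so that it descends to a well-defined map on the associated bundle.

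First, I would fix the model fiber $V = \RR^k$ with its standard inner product and rewrite the polar-decomposition argument as an explicit deformation retraction. For $A \in \F^n(V)$ with polar decomposition $A = P_A B_A$ (where $P_A = (AA^T)^{1/2}$ is the unique positive-definite symmetric square root of the frame operator and $B_A = P_A^{-1} A \in \P^n(V)$), set
\[
H \colon \F^n(V) \times [0,1] \to \F^n(V), \qquad H(A,t) = \bigl((1-t)P_A + tI\bigr)B_A.
\]
Since the cone of positive-definite symmetric operators on $V$ is convex, $(1-t)P_A + tI$ is positive-definite (hence invertible) for every $t$, so $H(A,t)$ is a genuine frame. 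One checks that $H(A,0) = A$, $H(A,1) = B_A \in \P^n(V)$, and $H(A,t) = A$ whenever $A \in \P^n(V)$, so $H$ is a strong deformation retraction of $\F^n(V)$ onto $\P^n(V)$. Smoothness follows from the smooth dependence of the positive-definite square root on a positive-definite symmetric matrix.

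Next, with the Riemannian structure on $E$ in hand, recall that the structure group of $\Hom(\RR^n,E)$ reduces to $G \coloneqq \SO(k) \times \SO(n)$, acting on the model fiber via $(M,N)\cdot A = MAN^T$. The key claim is that $H$ is $G$-equivariant. The computation
\[
(MAN^T)(MAN^T)^T = M A N^T N A^T M^T = M(AA^T)M^T = (MP_AM^T)^2,
\]
combined with uniqueness of the positive-definite symmetric square root, gives $P_{(M,N)\cdot A} = MP_AM^T$, and hence $B_{(M,N)\cdot A} = MB_AN^T = (M,N)\cdot B_A$. Therefore
\[
H\bigl((M,N)\cdot A, t\bigr) = \bigl((1-t)MP_AM^T + tI\bigr)MB_AN^T = M\bigl((1-t)P_A + tI\bigr)B_A N^T = (M,N)\cdot H(A,t),
\]
using $MM^T = I$ to pull the $M$ through the identity summand.

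Finally, since $H$ is $G$-equivariant and sends $\P^n(V)$ to itself at every time, it descends to a well-defined fiber-preserving map on the associated bundles, yielding a fiber-preserving strong deformation retraction $\F^n(E) \times [0,1] \to \F^n(E)$ onto $\P^n(E)$. The main obstacle is the equivariance verification above; once that is in hand, the standard principle that $G$-equivariant fiberwise constructions on the model fiber pass to the associated bundle delivers the result with no further work. (An essentially identical argument, using Hermitian polar decomposition and $\U(k) \times \U(n)$-equivariance, will handle the complex case treated later in the paper.)
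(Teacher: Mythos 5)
Your proof is correct, and it takes a genuinely different route from the paper's. The paper globalizes the fiberwise homotopy equivalence of Proposition \ref{prop:Parseval_hom_eq_to_frames} indirectly: it compares the long exact homotopy sequences of the two fibrations, applies the five lemma to conclude that $\overline{\iota}_\ast$ is an isomorphism on all homotopy groups, invokes Whitehead's theorem to get a homotopy equivalence commuting with the projections, and then cites a standard result to upgrade this to a fiber homotopy equivalence. You instead make the polar-decomposition retraction explicit as $H(A,t) = \bigl((1-t)P_A + tI\bigr)B_A$, verify that it is equivariant for the $\SO(k)\times\SO(n)$-action via $P_{(M,N)\cdot A} = MP_AM^T$ and $B_{(M,N)\cdot A} = MB_AN^T$ (both computations check out), and let it descend to the associated bundle $G(E)\times_G \F^n(\RR^k)$. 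Your approach buys several things: it is constructive and elementary, avoids CW hypotheses and Whitehead's theorem entirely, and directly produces the fiber-preserving \emph{strong deformation retraction} asserted in the statement, whereas the paper's argument most naturally delivers a fiber homotopy equivalence. You are also right that you need the Riemannian structure to reduce the structure group of $\F^n(E)$ to $G$ before the equivariant homotopy can descend, and you address this; that reduction is available exactly because the theorem presupposes the data needed to define $\P^n(E)$ in the first place. The one thing the paper's softer argument buys in exchange is robustness: it would apply verbatim to any fiberwise inclusion inducing isomorphisms on homotopy groups, without needing an explicit equivariant retraction.
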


 \begin{proof}
     By the constructions described above, there is a natural inclusion $\overline{\iota}:\P^n(E)\to \F^n(E)$ given at the fiber level by the inclusion $\iota:\P^n(\RR^k)\to \F^n(\RR^k)$ that Proposition \ref{prop:Parseval_hom_eq_to_frames} shows is a homotopy equivalence. At the level of homotopy groups, we have the following commutative diagram whose rows come from the long exact sequence of the fibrations of the frame and Parseval bundles, respectively.

     $$
     \begin{tikzcd}
    \pi_{i+1}(M)\arrow[r]\arrow[d, "\mathsf{Id}",leftrightarrow] & \pi_i(\P^n(\RR^k))\arrow[r]\arrow[d,"\iota_\ast"]& \pi_i(\P^n(E))\arrow[r]\arrow[d,"\overline{\iota}_\ast"] &\pi_i(M)\arrow[r]\arrow[d,"\mathsf{Id}",leftrightarrow] &\pi_{i-1}(\P^n(\RR^k))\arrow[d,"\iota_\ast"]\\
    \pi_{i+1}(M)\arrow[r]& \pi_i(\F^n(\RR^k))\arrow[r]& \pi_i(\F^n(E))\arrow[r] &\pi_i(M)\arrow[r] &\pi_{i-1}(\F^n(\RR^k))
     \end{tikzcd}
     $$

     With the exception of the map $\overline{\iota}:\pi_i(\P^n(E))\to \pi_i(\F^n(E))$, we know that the remaining maps are all isomorphisms, so the five lemma implies that $\overline{\iota}$ is also an isomorphism. This implies that $\P^n(E)$ and $\F^n(E)$ are weakly homotopy equivalent, and by applying Whitehead's theorem we obtain a homotopy equivalence between $\P^n(E)$ and $\F^n(E)$ that commutes with the bundle projections and induces the above maps. By a standard result in algebraic topology (see \cite[Ch. 7, \S 5]{May_concise_alg_top}, for example) it then follows that the above homotopy equivalence is a fiber homotopy equivalence, and the result follows.
 \end{proof}

This theorem immediately implies the following corollary.

\begin{corollary}\label{cor:parseval_if_and_only_if_frame}
    A vector bundle $E \to M$ admits a Parseval frame of size $n$ (i.e., a section of $\P^n(E)$) if and only if it admits a frame of size $n$ (i.e., a section of $\F^n(E)$).
\end{corollary}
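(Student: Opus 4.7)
The plan is to deduce the corollary immediately from Theorem~\ref{thm:parseval_deformation_retract}, with essentially no additional work. The content of that theorem is that there is a fiber-preserving strong deformation retraction $r : \F^n(E) \to \P^n(E)$; in particular, $r$ commutes with the two bundle projections onto $M$, and the inclusion $\overline{\iota} : \P^n(E) \hookrightarrow \F^n(E)$ satisfies $r \circ \overline{\iota} = \mathrm{id}_{\P^n(E)}$.

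For the forward direction, I would simply note that every Parseval frame is in particular a frame: given a section $\tau : M \to \P^n(E)$, the composition $\overline{\iota} \circ \tau : M \to \F^n(E)$ is a well-defined section because $\overline{\iota}$ covers the identity on $M$.

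For the reverse direction, I would use the retraction: given a section $\sigma : M \to \F^n(E)$, define $\tau \coloneqq r \circ \sigma : M \to \P^n(E)$. Since $r$ is fiber-preserving, $\pi_{\P^n(E)} \circ \tau = \pi_{\P^n(E)} \circ r \circ \sigma = \pi_{\F^n(E)} \circ \sigma = \mathrm{id}_M$, so $\tau$ is a genuine section of $\P^n(E)$.

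There is really no main obstacle here; all the work has already been done in proving Theorem~\ref{thm:parseval_deformation_retract}, where upgrading the fiberwise homotopy equivalence to a fiber homotopy equivalence (via the five lemma, Whitehead's theorem, and the standard fiber-homotopy-equivalence criterion) was the substantive step. The corollary is a formal consequence of the existence of a fiberwise retraction.
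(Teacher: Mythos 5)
Your argument is correct and is exactly what the paper intends: the authors state that Corollary~\ref{cor:parseval_if_and_only_if_frame} follows immediately from Theorem~\ref{thm:parseval_deformation_retract}, and the immediate deduction is precisely your two compositions (a Parseval frame is a frame via $\overline{\iota}$, and a frame yields a Parseval frame by postcomposing with the fiber-preserving retraction). No gaps; you have simply written out the ``immediately implies'' that the paper leaves to the reader.
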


Finally, we give alternative, and somewhat more concrete, descriptions of the bundles $\F^n(E)$ and $\P^n(E)$. Let $\overline{G}=\SL(k)\times \SL(n)$ and recall that $\Hom(\RR^n,E)$ is a $\overline{G}$-bundle. We can thus form the associated principal $\overline{G}$-bundle $\overline{G}(E)$. The bundle $\overline{G}(E)$ admits a fiber-preserving right $\overline{G}$-action by virtue of being a principal bundle, so we get a \emph{left} action of $\overline{G}$ on $\overline{G}(E)\times \F^n(\RR^k)$ by $g\cdot (A,B)=(A\cdot g^{-1},g\cdot B)$. The quotient by this action is denoted by $\overline{G}(E)\times_{\overline{G}}\F^n(\RR^k)$. Analogously, if we let $G=\SO(k)\times \SO(n)$, then using the Riemannian structure on $E$ we can form the principal $G$-bundle $G(E)$ associated to $\Hom(\RR^n,E)$. The quotient of the action of $G$ on $G(E) \times \P^n(\RR^k)$ is denoted $G(E) \times_G \P^n(\RR^k)$. The following is a standard fact from bundle theory.

\begin{proposition}\label{prop:associated_fiber_bundles}
    The spaces $\overline{G}(E)\times_{\overline{G}} \F^n(\RR^k)$ and $\F^n(E)$ are isomorphic as $\F^n(\RR^k)$-bundles. Similarly, the spaces $G(E)\times_G \P^n(\RR^k)$ and $\P^n(E)$ are isomorphic as $\P^n(\RR^k)$-bundles.
\end{proposition}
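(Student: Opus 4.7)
The plan is to invoke the standard dictionary between two common descriptions of a fiber bundle: as a space obtained by clutching local trivializations along a cocycle, and as the mixing construction $P \times_G F$ built from a principal $G$-bundle $P$ and a $G$-space $F$. The two sides of each claimed isomorphism will be shown to arise from the same cocycle on $M$, with the same fiber, acted upon in the same way.

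First I would choose an open cover $\{U_\alpha\}$ of $M$ over which $\Hom(\RR^n,E)$ trivializes, producing transition functions $g_{\alpha\beta}: U_\alpha \cap U_\beta \to \overline{G}$. By the very definition of $\F^n(E)$ as the associated fiber bundle given in Section~\ref{sec:parseval_bundle}, $\F^n(E)$ is obtained by gluing the product charts $U_\alpha \times \F^n(\RR^k)$ via the $\overline{G}$-action on $\F^n(\RR^k)$ (from Proposition~\ref{prop:frames_homogeneous_space}, restricted to $\overline{G}$) through the cocycle $\{g_{\alpha\beta}\}$.

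Second I would observe that the principal bundle $\overline{G}(E)$ is by construction clutched from $U_\alpha \times \overline{G}$ by left translation by these same $\{g_{\alpha\beta}\}$, so that the mixed space $\overline{G}(E) \times_{\overline{G}} \F^n(\RR^k)$ inherits local trivializations over each $U_\alpha$ whose transition functions are again $\{g_{\alpha\beta}\}$ acting on $\F^n(\RR^k)$ in exactly the prescribed way. Concretely, the isomorphism is given by $[A, B] \mapsto A \cdot B$, where $A \in \overline{G}(E)_p$ is interpreted as a frame of the fiber $\Hom(\RR^n,E)_p$ of the underlying $\overline{G}$-bundle, and $A \cdot B$ is formed via the $\overline{G}$-action on $\F^n(\RR^k)$; well-definedness under the relation $(A\cdot g^{-1}, g\cdot B) \sim (A,B)$ is immediate from equivariance. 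Locally over each $U_\alpha$ this map is the identity on $U_\alpha \times \F^n(\RR^k)$, so it is a smooth bundle isomorphism preserving fibers.

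Third, the Parseval statement is proven identically after replacing $\overline{G}$ by $G = \SO(k) \times \SO(n)$ throughout: the Riemannian structure on $E$ reduces the cocycle to $G$-valued transition functions, the action of $G$ on $\P^n(\RR^k)$ comes from Proposition~\ref{prop:parseval_homogeneous} (restricted to $\SO$), and the clutching-versus-mixing comparison proceeds verbatim. Since the proposition is labeled a standard fact, the main obstacle is bookkeeping rather than content. The subtlest point to verify is that the $\overline{G}$- and $G$-actions used in the two constructions are genuinely the same (no accidental inversion or transposition), which is a direct check using the homogeneous-space descriptions from Propositions~\ref{prop:frames_homogeneous_space} and~\ref{prop:parseval_homogeneous}.
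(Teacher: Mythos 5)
Your proof is correct: the paper itself offers no argument for this proposition, simply asserting it as ``a standard fact from bundle theory,'' and your cocycle comparison --- both sides are clutched from the same $\overline{G}$-valued (resp.\ $G$-valued) transition functions of $\Hom(\RR^n,E)$ acting on the same fiber, with the explicit isomorphism $[A,B]\mapsto A\cdot B$ --- is exactly the standard argument being invoked. Your attention to the action conventions (checking that $(A\cdot g^{-1}, g\cdot B)\sim(A,B)$ makes the map well defined) is the right place to be careful, and it checks out against the left action $(M,N)\cdot A = MAN^{-1}$ used in Propositions~\ref{prop:frames_homogeneous_space} and~\ref{prop:parseval_homogeneous}.
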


\subsection{Complex vector bundles.}\label{sec:complex_vector_bundles} Here we explain how to extend these ideas to the setting of complex vector bundles. Let $E\to M$ be a rank-$k$ complex vector bundle. More precisely, with $\GL(\CC^k)\hookrightarrow \GL(\RR^{2k})$  the standard embedding, $E\to M$ is a rank-$2k$ real vector bundle along with a reduction of structure group from $\GL(\RR^{2k})$ to $\GL(\CC^k)$. Since $\SL(\CC^k)$ is a deformation retract of $\GL(\CC^k)$ there is no loss of generality in assuming that the structure group is $\SL(\CC^k)$. Suppose that $E$ is equipped with a smoothly varying family of positive-definite Hermitian products on each fiber. Such a vector bundle will be called \define{Hermitian} and the choice of such a family of fiberwise Hermitian forms will be called a \define{Hermitian structure} on $E$. The presence of a Hermitian structure on $E$ allows the structure group to be further reduced to $\SU(k)$. If we let $G_{\CC}=\SU(k)\times \SU(n)$ and define $\Hom(\CC^n,E)\coloneqq\Hom(M\times \CC^n,E)$, then $\Hom(\CC^n,E)$ is a vector bundle whose structure group can be reduced to $G_\CC$. As in the real case, we can define the $n$-Parseval bundle $\P^n(E)$ as the associated bundle $G_\CC(E)\times_{G_\CC} \P^n(\CC^k)$, where $G_\CC(E)$ is the $G_\CC$-principal bundle associated to $\Hom(\CC^n,E)$.

\subsection{Evidence for the optimality of Parseval frames}\label{subsec:numerical_experiment}

Extrapolating from the case of frames on individual vector spaces~\cite{munch1992noise,benedetto2001wavelet}, we expect that Parseval frames on vector bundles will be more robust to noise than general frames. While proving this is beyond the scope of the present paper, we did the following numerical experiment which seems to confirm this hypothesis in the case of frames of size $3$ on the unit sphere $S^2 \subset \RR^3$. Code for this experiment is available on Github~\cite{frames-on-manifolds-code}.

As our Parseval frame on $S^2$, we take the projections of the standard basis vectors to the tangent space at each point; that is, for each $p \in S^2$, $\sigma_i(p)$ is the orthogonal projection of the $i$th standard basis vector to $T_pS^2$ (this is guaranteed to produce a Parseval frame at each point---see Lemma \ref{lem:naimark_han_larson} below). Here and in what follows, for computational purposes we do not consider \textit{all} points on the sphere, but rather work at the points of Sloane's putatively optimal packing of 1592 points on the sphere~\cite{sloane}. So in practice we are representing our Parseval frame by 1592 triples of vectors, where the vectors in the $i$th triple lie in the tangent space to the $i$th point from Sloane's list.

We generated 1000 random frames on $S^2$ as follows. We first generated three smooth vector fields in $\RR^3$ using simplex noise~\cite{opensimplex} and, at each of the 1592 points on the sphere, normalized each of the three vectors at that point to be a unit vector (since these are not yet tangent to the sphere, this does not imply the existence of a nowhere-vanishing vector field on the sphere). Finally, then, to get a frame at that point we orthogonally project each of the three unit vectors to the tangent space at that point. We also generated 1000 smooth vector fields on $S^2$ by generating smooth vector fields on $\RR^3$ using simplex noise and projecting them to the tangent space at each point on the sphere.

For each of our 1001 frames (the Parseval frame and the 1000 random frames), we attempted to reconstruct each of the 1000 vector fields from noisy data. Specifically, at each $p \in S^2$ (again, $p$ is one of Sloane's 1592 points), the vector field determines $v(p) \in T_p S^2$, the frame gives $\sigma_1(p), \sigma_2(p), \sigma_3(p) \in T_p S^2$, and we computed the dot products of $v(p)$ with the $\sigma_i(p)$; the resulting triple of numbers is our data at the point. If $\sigma(p)$ is the $2 \times 3$ matrix whose columns are the coordinates of the $\sigma_i(p)$ with respect to some orthonormal basis for $T_pS^2$ and we write $v(p)$ as a 2-dimensional vector whose entries are the coordinates of $v(p)$ with respect to the same basis, then the data is $\sigma(p)^T v(p)$. 

We added Gaussian white noise with covariance matrix $0.01 \bI_{3}$ to the data $\sigma(p)^T v(p)$, then attempted to reconstruct the original vector by applying the reconstruction operator $(\sigma(p) \sigma(p)^T)^{-1} \sigma(p)$, which provides a perfect reconstruction in the absence of noise. Computing the squared error in this reconstruction gives a non-negative number at each of our 1592 points for each vector field, and the mean of these numbers gives a mean squared error (MSE) for the reconstruction of the vector field from the noise-corrupted data.

\begin{figure}
    \centering
    \includegraphics[width=2.3in]{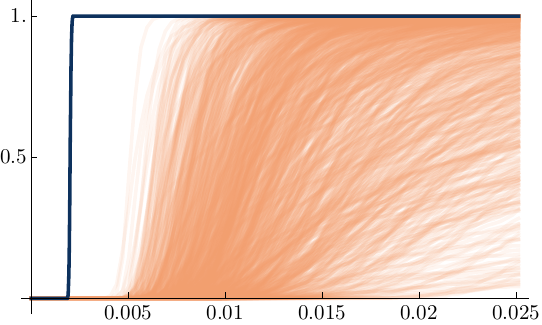} \qquad \includegraphics[width=2.3in]{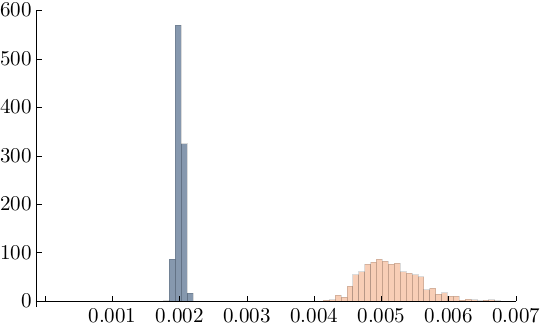}
    \caption{Left: plots of the empirical CDFs of the MSE distributions for the Parseval frame (dark blue) and the random frames (light orange). Right: histograms of the MSE distributions for the Parseval frame (dark blue) and for the best of the random frames (lighter orange).}
    \label{fig:numerical experiment}
\end{figure}

Therefore, each of our 1001 frames has an associated distribution of 1000 MSEs: one MSE for each of the 1000 random smooth vector fields. Overall, the Parseval frame has significantly smaller MSEs than the random frames: the MSEs of the Parseval frame all lie in the interval $(0.0018,0.0022)$, whereas the 1 million MSEs for the random frames (1000 MSEs for each of 1000 random frames) were in the interval $(0.0038,51.31)$. We visualize the MSE distributions in Figure~\ref{fig:numerical experiment}, which shows empirical cumulative distribution functions (CDFs) of the MSE distributions on the left and the histograms of MSE distributions for the Parseval frame and the best of the random frames on the right.

\section{Existence and Obstructions for Parseval frames}\label{sec:existence_of_frames}

In this section, we prove the existence of Parseval frames of sufficiently large size for general vector bundles. The proofs are very short and appeal to results from~\cite{Steenrod_Fibre_Bundles} in a rather blackbox fashion. The basic ideas rely on the \emph{obstruction theory} for sections of bundles. We then give improved existence results for tangent bundles for various families of manifolds, and provide a short discussion of the existence question from the perspective of classifying spaces.

\subsection{Existence of Parseval frames} Roughly speaking, the existence of sections of a bundle is obstructed by a sequence of cohomology classes with coefficients in the homotopy groups of the fiber. In our case, we know that the fiber of $\P^n(E)\to M$ is diffeomorphic to a Stiefel manifold (Proposition \ref{prop:Parseval_same_as_Stiefel}), and using the fact that such Stiefel manifolds have many trivial homotopy groups, we can get a bound on the number of vectors necessary to form a frame on $M$.

\begin{theorem}\label{thm:parseval_sections_exist}
Let $M$ be a compact $d$-dimensional manifold and let $E\to M$ be an orientable rank-$k$ Riemannian vector bundle. If $n\geq d+k$, then $\P^n(E)$ admits a section.
\end{theorem}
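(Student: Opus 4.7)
The plan is to realize the existence of a Parseval frame as a sectioning problem for the fiber bundle $\P^n(E)\to M$ and then apply classical obstruction theory, exploiting the high connectivity of Stiefel manifolds. By Proposition \ref{prop:associated_fiber_bundles}, $\P^n(E)$ is a fiber bundle over $M$ whose fiber is $\P^n(\RR^k)$, which in turn is diffeomorphic to the compact Stiefel manifold $\St_k(\RR^n)$ by Proposition \ref{prop:Parseval_same_as_Stiefel}. Since we have assumed $E$ is orientable and Riemannian, we reduced the structure group of $\Hom(\RR^n,E)$ to $G=\SO(k)\times\SO(n)$, which is connected. This ensures the local coefficient systems appearing in obstruction theory are trivial, so the obstruction classes live in ordinary integral (or $\ZZ$-coefficient) cohomology of $M$.

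Next I would invoke the standard obstruction-theoretic framework for sections of fiber bundles (as in \cite{Steenrod_Fibre_Bundles}). Pick a CW structure on $M$ (or equivalently a triangulation, since $M$ is compact and smooth). Inductively construct sections over the skeleta of $M$: given a section over the $i$-skeleton $M^{(i)}$, the obstruction to extending it over $M^{(i+1)}$ is a cocycle with values in $\pi_i(\P^n(\RR^k))=\pi_i(\St_k(\RR^n))$, whose cohomology class lies in $H^{i+1}(M;\pi_i(\St_k(\RR^n)))$. A section over all of $M$ exists provided every such obstruction class vanishes for $0\leq i\leq d-1$, since $M$ has dimension $d$.

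The key geometric input is the classical computation of low-dimensional homotopy groups of Stiefel manifolds: from the fibration
\[
\SO(n-k)\longrightarrow \SO(n)\longrightarrow \St_k(\RR^n)
\]
and its long exact sequence, one obtains $\pi_i(\St_k(\RR^n))=0$ for all $i\leq n-k-1$. Under the hypothesis $n\geq d+k$, we have $n-k-1\geq d-1$, so $\pi_i(\St_k(\RR^n))$ vanishes for every $i\in\{0,1,\dots,d-1\}$. Consequently every obstruction group $H^{i+1}(M;\pi_i(\St_k(\RR^n)))$ in the relevant range is zero, each obstruction class is automatically trivial, and the inductive construction extends all the way through the top dimension, producing a global section of $\P^n(E)\to M$.

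The main conceptual hurdle is not the inductive extension step itself but justifying that the obstruction theory applies cleanly: namely, that orientability of $E$ together with the choice of Riemannian structure really yields a bundle whose structure group acts trivially on the homotopy of the fiber (so that coefficients are untwisted), and that the relevant homotopy groups of $\St_k(\RR^n)$ vanish in the expected range. Both are classical but warrant a brief explicit verification; everything else is formal.
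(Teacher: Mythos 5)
Your argument is correct and is essentially the paper's proof: both reduce to obstruction theory for sections of $\P^n(E)\to M$ with fiber $\St_k(\RR^n)$ and use the vanishing of $\pi_i(\St_k(\RR^n))$ for $i<n-k$, the only difference being that you unpack the skeleton-by-skeleton induction that the paper delegates to Corollary~29.3 of \cite{Steenrod_Fibre_Bundles}. The paper adds one final step you omit---invoking Steenrod's approximation theorem to upgrade the continuous section to a smooth one---which is worth including if sections are meant to be smooth.
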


\begin{proof}
First, choose a triangulation for $M$---that is, a homeomorphism with a simplicial complex (triangulations of smooth manifolds always exist~\cite{cairns1935triangulation,whitehead1940c1}). The $n$-Parseval bundle is a bundle over $M$ whose fibers are diffeomorphic to $\P^n(\RR^k)\cong \St_k(\RR^n)$ and the homotopy group $\pi_i(\St_k(\RR^n))$ is trivial if $i< n-k$~\cite[\S 25.6]{Steenrod_Fibre_Bundles}. Given our hypothesis on $n$, this happens when $i<d$, in which case Corollary~29.3 of~\cite{Steenrod_Fibre_Bundles} implies that any section defined on the zero skeleton of $M$---that is, the inverse image of the vertex set of the simplicial complex under our triangulation---can be extended to the whole of $M$. Since the zero skeleton is a discrete set, it is trivial to choose a section on it, as this amounts to choosing a Parseval frame at each point independently. In general, the section constructed in this fashion will only be continuous, but an approximation theorem of Steenrod (see~\cite[\S 6.7]{Steenrod_Fibre_Bundles} or~\cite{Wockel_Steenrod_Approximation}) ensures that a continuous section can always be replaced by a smooth section.

\end{proof}

Next, we discuss the analogue of Theorem~\ref{thm:parseval_sections_exist} for complex vector bundles, which has a similar proof.

\begin{theorem}\label{thm:parseval_sections_exist_complex}
    Let $M$ be a compact $d$-dimensional manifold and let $E\to M$ be a rank-$k$ Hermitian vector bundle. If $n\geq \floor{d/2}+k$ then $\P^n(E)$ admits a section. 
\end{theorem}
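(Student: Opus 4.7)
The plan is to mimic the proof of Theorem~\ref{thm:parseval_sections_exist} essentially verbatim, with the only new ingredient being the improved connectivity of complex Stiefel manifolds. The appearance of $\floor{d/2}$ rather than $d$ in the bound is forced by the fact that the unit spheres in $\CC^{n-j}$ have real dimension $2(n-j)-1$ instead of $(n-j)-1$, which doubles the connectivity range compared to the real case.

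First I would pick a smooth triangulation of $M$, invoking the classical theorems of Cairns and Whitehead~\cite{cairns1935triangulation,whitehead1940c1}. By the discussion in Section~\ref{sec:complex_vector_bundles}, the fiber of $\P^n(E)\to M$ is diffeomorphic to $\P^n(\CC^k)\cong \St_k(\CC^n)$. The next step is to quote (or, if desired, briefly verify) that the complex Stiefel manifold $\St_k(\CC^n)$ is $2(n-k)$-connected, i.e.,
\[
\pi_i(\St_k(\CC^n))=0 \quad\text{for all } i\le 2(n-k).
\]
This can be obtained by induction on $k$ from the fibration $\St_{k-1}(\CC^{n-1}) \to \St_k(\CC^n) \to S^{2n-1}$ (so that the base case $k=1$ is just $\St_1(\CC^n)=S^{2n-1}$, which is $(2n-2)$-connected), and standard references such as~\cite[\S 25.7]{Steenrod_Fibre_Bundles} give this bound directly.

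Now I would check that the hypothesis $n\ge \floor{d/2}+k$ implies $2(n-k)\ge d-1$. Indeed, if $d=2m$ is even, then $n-k\ge m$ so $2(n-k)\ge 2m=d>d-1$; if $d=2m+1$ is odd, then $n-k\ge m$ so $2(n-k)\ge 2m=d-1$. In both cases, $\pi_i(\St_k(\CC^n))=0$ for all $i<d$. The exact same invocation of~\cite[Cor.~29.3]{Steenrod_Fibre_Bundles} as in the proof of Theorem~\ref{thm:parseval_sections_exist} then lets us extend an arbitrary choice of section on the $0$-skeleton of $M$ to all of $M$, since every obstruction class lies in a cohomology group of $M$ (of degree at most $d$) with coefficients in a trivial homotopy group of the fiber. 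Finally, Steenrod's approximation theorem (see~\cite[\S 6.7]{Steenrod_Fibre_Bundles} or~\cite{Wockel_Steenrod_Approximation}) upgrades the resulting continuous section to a smooth one.

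There is no real obstacle here once one accepts the connectivity range of the complex Stiefel manifold: the proof is essentially a one-line translation of the real case with the real spheres $S^{n-k}$ replaced by the complex spheres $S^{2(n-k)+1}$. The only place where one must exercise a bit of care is the boundary computation $2(n-k)\ge d-1$, which is sharp in the even-dimensional case and which is precisely what allows the floor function to appear in the bound.
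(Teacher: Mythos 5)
Your proposal is correct and follows essentially the same route as the paper's proof: triangulate $M$, invoke the $2(n-k)$-connectivity of $\St_k(\CC^n)$ from~\cite[\S 25.7]{Steenrod_Fibre_Bundles}, check the arithmetic relating $n\geq \floor{d/2}+k$ to the vanishing of $\pi_i$ for $i<d$, apply Corollary~29.3 of~\cite{Steenrod_Fibre_Bundles}, and smooth the section via Steenrod approximation. If anything, your explicit parity check (yielding $2(n-k)\geq d-1$, with equality possible when $d$ is odd --- note your closing remark mislabels this as the even case) is slightly more careful than the paper's one-line version of the same estimate.
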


\begin{proof}
    Triangulate $M$ as above. If $i\leq 2(n-k)$ then we have that the homotopy group of the fiber $\St_k(\CC^n)$ is trivial~\cite[\S 25.7]{Steenrod_Fibre_Bundles}. Our hypothesis on $n$ implies that $2n\geq d+2k$, so $i < 2(n-k)$ whenever $i<d$. Hence, we can apply Corollary~29.3 of~\cite{Steenrod_Fibre_Bundles} to show that any section defined on the 0-skeleton of $M$ can be extended to all of $M$. As in the proof of  Theorem~\ref{thm:parseval_sections_exist}, we can replace the \textit{a priori} continuous section with a smooth section.
\end{proof}


\subsection{Parseval frames for tangent bundles}\label{sec:applications_to_tangent bundles}
This subsection focuses on the special case where $M$ is a $d$-dimensional Riemannian manifold and the bundle under consideration is the tangent bundle $TM \to M$. The tangent bundle is rank-$d$, so Theorem~\ref{thm:parseval_sections_exist} implies it admits a Parseval frame of size $2d$. In fact, in the tangent bundle setting, an argument involving the Whitney immersion theorem (cf.\ \cite[\S 2]{freeman2012moving}) together with Theorem \ref{thm:parseval_deformation_retract} proves the existence of a frame of size $2d-1$. This size is, however, not optimal in general, and it is interesting to consider conditions on $M$ which guarantee the existence of a Parseval frame of size $n$, with $d \leq n < 2d-1$. In this range, the relevant homotopy groups of $\St_d(\RR^n)$ can be non-trivial, hence the proof strategy of Theorem \ref{thm:parseval_sections_exist} fails, and the existence of sections can be obstructed. By definition, a $d$-dimensional Riemannian manifold $M$ admits a Parseval frame of size $d$ on its tangent bundle (i.e., an orthonormal global frame) if and only if $M$ is parallelizable. In this section we will prove several theorems that provide conditions for a $d$-manifold to admit a Parseval frame of size $d+1$. The first result in this direction is a straightforward consequence of the fact that surfaces can be embedded in $\RR^3$.


\begin{proposition}\label{prop:parseval_surface}
Let $\Sigma$ be a closed, orientable surface endowed with a Riemannian structure. Then $T\Sigma$ admits a Parseval frame of size 3. 
\end{proposition}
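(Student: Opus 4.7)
The plan is to produce an explicit size-3 frame on $T\Sigma$ using a classical embedding $\iota:\Sigma\hookrightarrow\RR^3$, and then to invoke Corollary~\ref{cor:parseval_if_and_only_if_frame} to upgrade this frame to a Parseval frame with respect to the given Riemannian structure.

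First I would invoke the classical fact that every closed orientable surface admits a smooth embedding $\iota:\Sigma\hookrightarrow\RR^3$. Via the differential, each tangent space $T_p\Sigma$ is identified with a 2-plane in $\RR^3$; equipping $\RR^3$ with its standard inner product, let $\pi_p:\RR^3\to T_p\Sigma$ denote the orthogonal projection. For $i=1,2,3$, define the section $\sigma_i(p) = \pi_p(e_i)$, where $(e_1,e_2,e_3)$ is the standard basis of $\RR^3$. Smoothness of $\iota$ yields smoothness of the assignment $p\mapsto\pi_p$, and hence of each $\sigma_i$. Since $\pi_p$ is surjective and $\{e_1,e_2,e_3\}$ spans $\RR^3$, the images $\sigma_i(p)$ span $T_p\Sigma$ at every point. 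Therefore $(\sigma_1,\sigma_2,\sigma_3)$ constitutes a size-3 frame on $T\Sigma$. Observe that this construction is purely bundle-theoretic and makes no reference to any particular metric on $\Sigma$; it simply produces three smooth sections that span each fiber.

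Finally, Corollary~\ref{cor:parseval_if_and_only_if_frame} asserts that, for any fixed Riemannian structure, a size-$n$ frame on a vector bundle exists if and only if a size-$n$ Parseval frame exists. Applying this to the size-3 frame above together with the Riemannian structure specified in the hypothesis yields the desired size-3 Parseval frame on $T\Sigma$. The only non-formal ingredient is the embedding of closed orientable surfaces in $\RR^3$, which is classical (and explicit for the sphere, torus, and higher genus surfaces), so I do not anticipate any real obstacle in filling in the details.
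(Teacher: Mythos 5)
Your proposal is correct and follows essentially the same route as the paper: embed $\Sigma$ in $\RR^3$, project the standard basis of $\RR^3$ orthogonally onto each tangent plane, and then use the frame--Parseval equivalence (Theorem~\ref{thm:parseval_deformation_retract}/Corollary~\ref{cor:parseval_if_and_only_if_frame}) to handle the given Riemannian structure. The only cosmetic difference is that the paper observes via Lemma~\ref{lem:naimark_han_larson} that the projected basis is already Parseval for the induced metric before invoking metric-independence, whereas you record only that the projections span each fiber and let the corollary do the rest; both are valid.
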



Throughout this section, we make use of the following standard result from frame theory. This is a finite-dimensional version of the Naimark--Han--Larson Dilation Theorem.

\begin{lemma}[See, e.g., Theorem 2.2 of \cite{waldron2018introduction}]\label{lem:naimark_han_larson}
    Let $V$ be a $k$-dimensional real inner product space and let $W$ be an $n$-dimensional real inner product space, where $n \geq k$. The image of an orthonormal basis for $W$ under an orthogonal projection $W \to V$ defines an $n$-Parseval frame for $V$. Moreover, any $n$-Parseval frame for $V$ can be realized in this way. 
\end{lemma}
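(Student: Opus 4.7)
My plan is to prove both directions of the dilation theorem using the characterization of Parseval frames via the transpose map, which was established in Proposition~\ref{prop:parseval_tight_frame_condition}: a frame $A \in \F^n(V)$ is Parseval if and only if $A^T: V \to \RR^n$ is an isometric embedding. This reduces the claim to an observation about isometric embeddings between inner product spaces.

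For the forward direction, I fix an orthonormal basis $\{w_1,\dots,w_n\}$ of $W$ and an orthogonal projection $P: W \to V$ (interpreting $V$ as a subspace of $W$). Define $A: \RR^n \to V$ by sending the $i$-th standard basis vector to $P(w_i)$. For $v \in V$, the self-adjointness of $P$ together with $P|_V = \mathrm{id}_V$ gives $\langle v, P(w_i)\rangle = \langle Pv, w_i\rangle = \langle v, w_i\rangle$, so the $i$-th coordinate of $A^T v \in \RR^n$ is $\langle v, w_i\rangle$. Parseval's identity for the orthonormal basis $\{w_i\}$ then yields $\|A^T v\|^2 = \sum_i \langle v, w_i\rangle^2 = \|v\|^2$, showing $A^T$ is an isometric embedding and hence $A$ is Parseval.

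For the converse, given an $n$-Parseval frame $A: \RR^n \to V$, Proposition~\ref{prop:parseval_tight_frame_condition} tells us that $A^T: V \hookrightarrow \RR^n$ is an isometric embedding. Take $W = \RR^n$ equipped with its standard inner product and standard orthonormal basis $\{e_1,\dots,e_n\}$, and identify $V$ with its image $A^T(V) \subseteq W$ via $A^T$. I claim that $A^T A: W \to W$ is the orthogonal projection onto $A^T(V)$: it is symmetric, its image is $A^T(V)$, and it is idempotent because $(A^T A)^2 = A^T(AA^T)A = A^T A$ using the Parseval identity $AA^T = \mathrm{id}_V$. Under the identification $V \cong A^T(V)$, the frame vector $A(e_i)$ then corresponds to $A^T A(e_i) = P(e_i)$, exhibiting each frame vector as the image of a standard orthonormal basis vector under the orthogonal projection $P: W \to V$.

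I expect no serious obstacles: both directions are short linear-algebra computations, and the only subtle point is arranging the dualities correctly so that the Parseval condition $AA^T = \mathrm{id}_V$ cleanly implies that $A^T A$ is a projection. Once that identification is made, the rest is mechanical.
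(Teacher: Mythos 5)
The paper does not actually prove this lemma---it is quoted as a standard fact with a citation to Theorem 2.2 of \cite{waldron2018introduction}---so there is no in-paper argument to compare against. Your proof is correct and is the standard dilation argument: both directions check out, the computation $(A^TA)^2 = A^T(AA^T)A = A^TA$ correctly identifies $A^TA$ as the orthogonal projection onto $A^T(V)$, and you rightly use the fact that $A^T$ is an isometric embedding (Proposition~\ref{prop:parseval_tight_frame_condition}) to justify identifying $V$ with $A^T(V) \subseteq \RR^n$ as inner product spaces, which is the one point where the converse could otherwise go wrong. The only cosmetic remark is that surjectivity of $A$ in the forward direction (needed for $A$ to be a frame at all) follows immediately from injectivity of $A^T$, which your isometry computation already gives.
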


\begin{proof}[Proof of Proposition \ref{prop:parseval_surface}]
Embed $\Sigma$ in $\RR^3$ and endow it with the Riemannian metric induced by the embedding. At each point $p\in \Sigma$, we can take the orthogonal projection of the standard basis of $T_p \RR^3 \cong \RR^3$ to the tangent space $T_p\Sigma$. The resulting triple of vectors results in a Parseval frame of size $3$, by Lemma \ref{lem:naimark_han_larson}. Since Theorem~\ref{thm:parseval_deformation_retract} implies that the existence of a Parseval frame does not depend on the choice of Riemannian structure, this completes the proof.
\end{proof}

\begin{remark}
    We see from the proof of Proposition \ref{prop:parseval_surface} that the particular Riemannian structure on $\Sigma$ is not relevant to the existence of a Parseval frame, due to Theorem \ref{thm:parseval_deformation_retract}. This is a general observation, so we will sometimes suppress reference to a specific Riemannian structure below when discussing existence questions.
\end{remark}

Before proceeding with additional examples, we need to describe some additional structure possessed by $\P^{d+1}(TM)$ when $M$ is a smooth Riemannian $d$-manifold. The Parseval bundle $\P^{d+1}(TM)$ is a bundle over $M$ with fiber $\P^{d+1}(\RR^d)$. The space $\P^{d+1}(\RR^d)$ is diffeomorphic to the Stiefel manifold $\St_d(\RR^{d+1})$, which is diffeomorphic to $\SO(d+1)$. The structure group of the Parseval bundle is $G=\SO(d)\times \SO(d+1)$ and the $G$-action on the fiber $\P^{d+1}(\RR^d)\cong \SO(d+1)$ is given by $(A,B)\cdot C=\iota(A)CB^{-1}$, where $\iota:\SO(d)\to \SO(d+1)$ is the standard embedding. As such, this action factors through the natural action of $\tilde G\coloneqq\SO(d+1)\times \SO(d+1)$ on $\SO(d+1)$. Hence, we can regard the Parseval bundle as a $\tilde G$-bundle. 

Geometrically, one can interpret the above construction as follows. Start with $TM$ and replace it with its so-called \define{stable tangent bundle} $TM\oplus \RR$; here, we are abusing notation in that the second summand $\RR$ represents the trivial line bundle $M \times \RR \to M$. The Parseval bundle $\P^{d+1}(TM\oplus \RR)$ is naturally a $\tilde G$-bundle. Furthermore, the principal $\tilde G$-bundle associated to $\P^{d+1}(TM\oplus \RR)$ is the same as the principal $\tilde G$-bundle coming from the $\tilde G$-structure on $\P^{d+1}(TM\oplus \RR)$ described above. 

With this in mind we state the following definition (see \cite{Kervaire_Milnor_Homotopy_Spheres} for more details). 

\begin{definition}[Stably parallelizable]
    A $d$-manifold $M$ will be called \define{stably parallelizable} if $TM\oplus \RR$ is the trivial bundle.
\end{definition}

The following theorem is the crucial tool for constructing the examples of this section and shows the strong relationship between stable parallelizability and the existence of size $d+1$ Parseval frames.

\begin{theorem}\label{thm:stably_parallel}
If $M$ is a stably parallelizable orientable Riemannian $d$-manifold, then $TM$ admits a Parseval frame of size $d+1$. Moreover, the converse is also true when $H^1(M,\ZZtwo)=0$.
\end{theorem}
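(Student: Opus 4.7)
The plan is to use Lemma~\ref{lem:naimark_han_larson} to set up a direct correspondence between size-$(d+1)$ Parseval frames on $TM$ and orthonormal trivializations of the stable tangent bundle $TM\oplus\RR$.

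For the forward implication, I would begin with a trivialization of $TM\oplus\RR$, which provides $d+1$ global sections $e_1,\dots,e_{d+1}$ that are orthonormal at every point. Let $\pi:TM\oplus\RR\to TM$ denote the orthogonal projection onto the first factor, and set $\sigma_i=\pi\circ e_i$; these are smooth sections of $TM$. Applying Lemma~\ref{lem:naimark_han_larson} pointwise shows that $(\sigma_i(p))_{i=1}^{d+1}$ is a Parseval frame for $T_pM$ at every $p\in M$, so the $\sigma_i$ assemble into the desired size-$(d+1)$ Parseval frame.

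For the converse, I would assume $H^1(M,\ZZtwo)=0$ and let $(\sigma_j)_{j=1}^{d+1}$ be a Parseval frame on $TM$. Assembling the sections produces a surjective bundle map $A:M\times\RR^{d+1}\to TM$ with $AA^T=\mathrm{Id}_{TM}$, and by Proposition~\ref{prop:parseval_tight_frame_condition} the transpose $A^T:TM\to M\times\RR^{d+1}$ is a fiberwise isometric embedding. Let $L\subset M\times\RR^{d+1}$ be the orthogonal complement subbundle to $A^T(TM)$; then $L$ is a smooth real line bundle, and the orthogonal splitting $A^T(TM)\oplus L\cong M\times\RR^{d+1}$ exhibits $TM\oplus L$ as trivial. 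Since real line bundles over a paracompact base are classified by $H^1(M,\ZZtwo)$, the hypothesis forces $L$ to be trivial, whence $TM\oplus\RR\cong TM\oplus L$ is trivial; that is, $M$ is stably parallelizable.

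The main technical point will be verifying that $L$ is a genuine smooth subbundle of the trivial bundle, but this is routine: $A^T$ has constant rank and depends smoothly on the base point, so both its image and its orthogonal complement vary smoothly. A secondary observation is that the orientability of $M$ alone already forces $w_1(L)=w_1(TM)=0$ and hence triviality of $L$; the $H^1(M,\ZZtwo)=0$ hypothesis provides a cleaner route that bypasses any Stiefel--Whitney class calculation by invoking the classification of real line bundles directly.
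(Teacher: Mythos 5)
Your proposal is correct, and the forward direction is essentially identical to the paper's argument (trivialize $TM\oplus\RR$, orthonormalize, project, apply Lemma~\ref{lem:naimark_han_larson} pointwise). The converse is where you genuinely diverge: the paper invokes the vector-bundle version of the Naimark--Han--Larson dilation theorem from~\cite[Thm 1.1]{freeman2012moving} as a black box to produce a line bundle $E$ with $TM\oplus E$ trivial, whereas you construct that complement by hand, taking $L$ to be the orthogonal complement of the image of the fiberwise isometric embedding $A^T:TM\to M\times\RR^{d+1}$ furnished by Proposition~\ref{prop:parseval_tight_frame_condition}. This is a legitimate and more self-contained route --- it amounts to reproving the special case of the dilation theorem actually needed, and the smoothness of $L$ is routine as you say, since $A^T$ is a fiberwise-injective smooth bundle map. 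Your closing observation is also correct and is in fact a mild strengthening of the stated theorem: from $w_1(TM)+w_1(L)=w_1(TM\oplus L)=0$ and orientability of $M$ one gets $w_1(L)=0$, and since real line bundles are classified by $w_1$, the bundle $L$ is trivial without any assumption on $H^1(M,\ZZtwo)$. So for orientable $M$ (the only case in which the paper's Parseval bundle is defined) the converse holds unconditionally; the $H^1(M,\ZZtwo)=0$ hypothesis in the paper is only needed if one treats ``line bundles are classified by $H^1(M,\ZZtwo)$'' as the sole available input, without the Whitney sum formula. The one thing your write-up omits that the paper includes is the alternative proof of the forward direction via triviality of the full Parseval bundle $\P^{d+1}(TM)$ as a $\tilde G$-bundle, but that is a stronger conclusion than the theorem requires.
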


\begin{proof}
By our hypothesis, we have that $TM\oplus \RR$ is a trivial vector bundle. Define a Riemannian structure on $TM \oplus \RR$ by extending the structure from $TM$. Using triviality, we can choose a global orthonormal moving basis for $TM \oplus \RR$. By Lemma \ref{lem:naimark_han_larson}, the image of this basis under the orthogonal projection to the subbundle $TM \subset TM\oplus \RR$ defines a size-$(d+1)$ Parseval frame for $TM$.

Using the $\tilde G$-bundle perspective, we can alternatively deduce the forward direction from the stronger conclusion that the Parseval bundle $\P^{d+1}(TM)$ is trivial. By our hypothesis, we have that $TM\oplus \RR$ is a trivial vector bundle and hence $\Hom(\RR^{d+1},TM\oplus \RR)$ is also trivial. This latter bundle is a $\tilde G$-bundle and it follows that the corresponding principal $\tilde G$-bundle is trivial. Finally, since $\P^{d+1}(TM)$ is an associated bundle of this principal $\tilde G$-bundle, it follows that $\P^{d+1}(TM)$ is also trivial.


For the other direction, we use the generalization of the  Naimark--Han--Larson dilation theorem to Riemannian vector bundles proved in~\cite[Thm 1.1]{freeman2012moving}. By applying this result, we obtain a line bundle $E\to M$ over $M$ so that $TM\oplus E$ is a trivial bundle. It is well known that equivalence classes of line bundles over $M$ are in one-to-one correspondence with $H^1(M,\ZZtwo)$ (see \cite[Theorem 4.14]{cohenbundles}) and so our triviality hypothesis implies that $E$ is the trivial bundle and hence $M$ is stably parallelizable.
\end{proof}

We now describe several classes of $d$-manifolds which admit Parseval frames of size $d+1$. We already saw in Proposition~\ref{prop:parseval_surface} that closed, orientable surfaces form such a class. Next, recall Stiefel's theorem, which says that closed, orientable 3-manifolds have trivial tangent bundles~\cite{stiefel1935richtungsfelder}; that is, they admit Parseval frames of size $d=3$. Therefore, $4$-manifolds are the next interesting case to consider.



\begin{corollary}\label{cor:4_manifolds}
Let $M$ be a closed, orientable 4-manifold with vanishing second Stiefel--Whitney class and vanishing first Pontryagin class. Then $TM$ admits a Parseval frame of size 5. 
\end{corollary}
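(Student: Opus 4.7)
The plan is to invoke the forward direction of Theorem~\ref{thm:stably_parallel}: it suffices to show that $M$ is stably parallelizable, i.e., that the oriented rank-$5$ bundle $TM\oplus\RR$ over the $4$-manifold $M$ is trivial. Without loss of generality I will assume $M$ is connected, treating each component separately otherwise.

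Since $M$ is orientable and $w_2(TM\oplus\RR)=w_2(TM)=0$ by hypothesis, $TM\oplus\RR$ admits a spin structure. The obstructions to constructing a global section of the associated principal $\SO(5)$-bundle live in the cohomology groups $H^{i+1}(M;\pi_i(\SO(5)))$ for $i=1,2,3$. Using the standard values $\pi_1(\SO(5))=\ZZtwo$, $\pi_2(\SO(5))=0$, and $\pi_3(\SO(5))=\ZZ$, only classes in $H^2(M;\ZZtwo)$ and $H^4(M;\ZZ)$ are potentially nonzero. The $H^2$-obstruction is $w_2(TM\oplus\RR)$, which vanishes. With a choice of spin structure fixed, the remaining obstruction in $H^4(M;\ZZ)$ is identified with the first spin characteristic class $\lambda=p_1/2$, coming from the generator of $H^4(\mathrm{BSpin}(5);\ZZ)\cong\ZZ$ and satisfying $2\lambda=p_1$.

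Because $M$ is closed, orientable, and connected, Poincar\'e duality gives $H^4(M;\ZZ)\cong\ZZ$, which is torsion-free. The hypothesis $p_1(TM)=0$ therefore forces $\lambda=0$, so all obstructions vanish and $TM\oplus\RR$ is trivial; that is, $M$ is stably parallelizable. Applying Theorem~\ref{thm:stably_parallel} yields the desired Parseval frame of size $5$ on $TM$.

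The step requiring the most care (and for which I would provide a citation, e.g., to Milnor--Stasheff or the standard literature on spin characteristic classes) is the identification of the final $H^4$-obstruction with $p_1/2$; once this is in place, the argument is routine obstruction-theoretic bookkeeping combined with the well-known low-dimensional homotopy of $\SO(5)$.
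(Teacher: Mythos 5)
Your proposal is correct and follows the same overall reduction as the paper: both arguments establish that $M$ is stably parallelizable and then invoke the forward direction of Theorem~\ref{thm:stably_parallel}. The difference is in how stable parallelizability is obtained. The paper treats it as a black box, citing a result of Cappell and Shaneson that a closed orientable $4$-manifold with $w_2=0$ and $p_1=0$ is stably parallelizable. You instead prove this fact directly by obstruction theory: using $\pi_1(\SO(5))=\ZZtwo$, $\pi_2(\SO(5))=0$, $\pi_3(\SO(5))=\ZZ$, the only possibly nonzero obstructions to trivializing $TM\oplus\RR$ lie in $H^2(M;\ZZtwo)$ and $H^4(M;\ZZ)$; the first is $w_2$, and the second---after lifting to a spin structure, so that $[M,B\Spin(5)]\cong H^4(M;\ZZ)$ since $B\Spin(5)$ is $3$-connected and $M$ is a $4$-complex---is the class $\lambda$ with $2\lambda=p_1$, which vanishes because $H^4(M;\ZZ)\cong\ZZ$ is torsion-free. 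This is essentially the standard proof of the cited Cappell--Shaneson statement, so your version buys self-containedness at the cost of importing the identification of the top obstruction with $p_1/2$ (which you correctly flag as the step needing a reference); the paper's version is shorter but opaque. One small point worth making explicit in your write-up is why the indeterminacy of the final obstruction under changes of section on the $3$-skeleton causes no trouble---the cleanest fix is exactly the Hopf--Whitney identification $[M,B\Spin(5)]\cong H^4(M;\ZZ)$ you implicitly use.
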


\begin{proof}
By a result of Cappell and Shaneson \cite{Cappell_Shaneson_Embeddings}, closed, orientable 4-manifolds whose second Stiefel--Whitney class and first Pontryagin classes both vanish are stably parallelizable, so the result then follows by applying Theorem \ref{thm:stably_parallel}.
\end{proof}

Let $M$ be a smooth $d$-manifold. Recall that $M$ is called a \define{homology sphere} if $H_i(M,\ZZ)\cong H_i(S^d,\ZZ)$ for all $i$. By work of Kervaire \cite[proof of Thm. 3]{Kervaire_smooth_homology_spheres} it follows that homology spheres are stably parallelizable. This gives the following immediate corollary:

\begin{corollary}\label{cor:homotopy_homology_spheres}
If $M$ is a $d$-dimensional homology sphere, then $M$ admits a Parseval frame of size $d+1$.
\end{corollary}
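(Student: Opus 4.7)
The plan is to treat this as a direct corollary of Theorem \ref{thm:stably_parallel}, using Kervaire's result that a smooth homology sphere is stably parallelizable. The argument really has only two ingredients, so I would present it quite briefly.

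First I would note that any homology sphere $M$ is automatically orientable, since $H_d(M,\ZZ) \cong H_d(S^d,\ZZ) \cong \ZZ$. I would then fix an arbitrary Riemannian structure on $TM$; by Theorem \ref{thm:parseval_deformation_retract} (and the remark following Proposition \ref{prop:parseval_surface}), the existence of a Parseval frame of a given size does not depend on this choice, so any such structure will do.

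Next I would invoke the cited result of Kervaire \cite{Kervaire_smooth_homology_spheres}, which asserts that every smooth homology sphere $M$ has trivial stable tangent bundle $TM \oplus \RR$; that is, $M$ is stably parallelizable in the sense defined above. With this in hand, Theorem \ref{thm:stably_parallel} applies directly and produces a Parseval frame of size $d+1$ on $TM$.

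There is no real obstacle here --- the entire content is packaged into the two external inputs (Kervaire's stable parallelizability result and Theorem \ref{thm:stably_parallel}). The only thing I would be careful about is not claiming the converse: since we do not assume $H^1(M,\ZZtwo) = 0$ (which is of course automatic for a homology sphere by the universal coefficient theorem, but unnecessary for this direction), we only need the forward implication of Theorem \ref{thm:stably_parallel}, whose proof proceeds by trivializing $TM \oplus \RR$ and then orthogonally projecting the resulting global orthonormal frame onto $TM$ via Lemma \ref{lem:naimark_han_larson}.
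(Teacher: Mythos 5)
Your proposal is correct and matches the paper's argument exactly: the corollary is stated as an immediate consequence of Kervaire's result that homology spheres are stably parallelizable combined with the forward direction of Theorem~\ref{thm:stably_parallel}. Your additional remarks on orientability and the irrelevance of the choice of Riemannian structure are accurate but not needed beyond what the paper already records.
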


Another class of manifolds for which stable parallelizability is well-understood are \define{lens spaces}, whose definition we now recall. Let $p$ be a prime and $b=(b_0\ldots b_n)$ be a collection of integers such that $1\leq b_i\leq p-1$. Next, define a free $\ZZ/p\ZZ$ action on $\CC^{n+1}$ generated by the map 
$$(z_0,\ldots,z_n)\mapsto (\lambda^{b_0}z_0,\ldots, \lambda^{b_n}z_n),$$
where $\lambda$ is a fixed primitive $p$th root of unity. This action preserves the standard $(2n+1)$-sphere $S^{2n+1}\subset \CC^{n+1}$ and the \define{lens space} $L(p,b)$ is the $(2n+1)$-dimensional manifold obtained as the quotient of $S^{2n+1}$ by this action. The following result shows that deciding whether or not a lens space is stably parallelizable is a purely modular arithmetic problem.

\begin{proposition}[\!\!\cite{Ewing_etal_lens_space}]\label{prop:lens_space_stable_parallel}
    For odd $p$, the lens space $L(p,b)$ is stably parallelizable if and only if $n<p$ and
    $$b_0^{2j}+\cdots+b_n^{2j}=0 \pmod{p}$$
    for $1\leq j\leq \floor{n/2}$
\end{proposition}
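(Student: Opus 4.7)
The plan is to identify $TL(p,b) \oplus \RR$ as a sum of canonical complex line bundles associated to the $\ZZ/p\ZZ$-action, translate stable triviality of this sum into an identity in the (real) K-theory of $L(p,b)$, and finally extract the arithmetic conditions by expanding in a standard presentation of $\widetilde{K}^0(L(p,b))$.

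First I would identify the stable tangent bundle. The eigenspace decomposition of the action on $\CC^{n+1}$ into the one-dimensional characters $\chi_{b_i}\colon \lambda \mapsto \lambda^{b_i}$ produces complex line bundles $\eta_{b_i}\to L(p,b)$ via the Borel construction $\eta_{b_i} = S^{2n+1} \times_{\ZZ/p\ZZ} \CC_{b_i}$. Since the normal bundle of $S^{2n+1} \subset \CC^{n+1}$ is the radial trivial line and this trivialization is $\ZZ/p\ZZ$-equivariant, a standard argument yields
\[
TL(p,b) \oplus \RR \;\cong\; \bigoplus_{i=0}^n \eta_{b_i}
\]
as real vector bundles. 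Hence $L(p,b)$ is stably parallelizable if and only if the right-hand side is trivial as a real bundle.

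Next I would convert this into a K-theoretic identity. For odd prime $p$, the complex K-theory of $L(p,b)$ is classically
\[
\widetilde{K}^0(L(p,b)) \;\cong\; \ZZ[\nu]/\bigl(\nu^{n+1},\,(1+\nu)^p - 1\bigr),
\]
where $\nu = \eta - 1$ for $\eta$ the canonical character-$\chi_1$ line bundle; in particular $[\eta_{b_i}] = (1+\nu)^{b_i}$. Moreover, for odd $p$ the forgetful map $\widetilde{KO}^0 \hookrightarrow \widetilde{K}^0$ is injective onto the $+1$-eigenspace of complex conjugation $[\xi] \mapsto [\overline{\xi}]$, and the real rank detects the total dimension. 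Thus stable triviality of $\bigoplus_i \eta_{b_i}$ is equivalent to the vanishing of
\[
\sum_{i=0}^n \bigl((1+\nu)^{b_i} + (1+\nu)^{-b_i} - 2\bigr) \quad \text{in } \widetilde{K}^0(L(p,b)).
\]

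Finally I would read off the arithmetic. Expanding each summand by the binomial theorem and picking off the coefficient of $\nu^j$, the odd-degree terms cancel by the symmetry $b \leftrightarrow -b$, and the even-degree coefficients are polynomial expressions in $b_0,\ldots,b_n$ that reduce via Newton's identities (relating elementary symmetric polynomials to power sums) to $\sum_i b_i^{2j}\bmod p$. The hypothesis $n < p$ is precisely what ensures that the relation $(1+\nu)^p - 1$ only enters in degrees $\geq p$ and therefore does not interfere with the coefficients in degrees $\leq n$; combined with $\nu^{n+1}=0$ and the halving caused by the descent from complex to real K-theory, this restricts the nontrivial conditions to exactly $\sum_i b_i^{2j} \equiv 0 \pmod p$ for $1 \leq j \leq \lfloor n/2 \rfloor$. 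The main obstacle is the bookkeeping in this last step: carefully tracking how the cyclotomic relation $(1+\nu)^p - 1$ interacts with the complex-conjugation constraint to produce the specific degree range $\lfloor n/2 \rfloor$, which is where the arithmetic hypothesis $n < p$ becomes indispensable rather than cosmetic.
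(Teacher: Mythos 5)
First, note that the paper does not prove this proposition at all: it is quoted verbatim from the cited reference of Ewing--Moolgavkar--Smith--Stong, so there is no in-paper argument to compare against. Your overall strategy does match the one in that reference: the splitting $TL(p,b)\oplus\RR\cong\bigoplus_i\eta_{b_i}$ (due to Szczarba), Kambe's presentation of $\widetilde{K}^0(L(p,b))$, injectivity of complexification on $\widetilde{KO}$ because the reduced groups are odd torsion, and Newton's identities to pass from binomial-coefficient conditions to power sums. When $n<p$ this machinery does work: the relation $(1+\nu)^p-1$ has leading term $p\nu$ and all coefficients divisible by $p$, so (contrary to your claim that it ``only enters in degrees $\geq p$'') its role is to reduce the ideal of relations to $(p\nu,\nu^{n+1})$, giving $\widetilde{K}^0\cong(\ZZ/p)^n$ with basis $\nu,\dots,\nu^n$ and letting you test vanishing coefficientwise mod $p$. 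A second small inaccuracy: the odd-degree terms do not cancel --- for example the $\nu^3$-coefficient of $(1+\nu)^{b}+(1+\nu)^{-b}-2$ is $-b^2$, not $0$ --- they are merely redundant, being determined by the lower even-degree coefficients via the conjugation symmetry.

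The genuine gap is the necessity of $n<p$. The proposition is an ``if and only if'' in which $n<p$ is part of the characterization: when $n\geq p$ the lens space is \emph{never} stably parallelizable, regardless of $b$, and your sketch does not address this direction. It cannot be dismissed as bookkeeping: when $n\geq p$ the classes $\nu^j$ acquire additive order $p^{1+\lfloor(n-j)/(p-1)\rfloor}>p$ for small $j$, so the vanishing of $\sum_i\bigl((1+\nu)^{b_i}+(1+\nu)^{-b_i}-2\bigr)$ imposes congruences modulo higher powers of $p$. The mod-$p$ conditions alone do not suffice to derive a contradiction (e.g.\ taking $2j=p-1$ gives $\sum_i b_i^{p-1}\equiv n+1\pmod p$ by Fermat, which only rules out the case $p\nmid n+1$), so one must genuinely analyze the higher torsion; this is a separate argument in the cited paper. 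Until you supply it, your proposal establishes only the ``if'' direction and the ``only if'' direction under the additional hypothesis $n<p$.
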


By applying Deligne's solution to the Weil conjectures, the authors of \cite{Ewing_etal_lens_space} are able to show that there are many examples of stably parallelizable lens spaces. In particular they show that for each $n$ there are infinitely many stably parallelizable $(2n+1)$-dimensional lens spaces (see \cite[Cor.~2.2]{Ewing_etal_lens_space}). In our setting, this implies the following. 
\begin{corollary}
    For each positive integer $n$, if $p \equiv 1 \pmod{n+1}$, then there is a lens space $L(p,b)$ of dimension $2n+1$ which admits a Parseval frame of size $2n+2$.
\end{corollary}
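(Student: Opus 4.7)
The plan is to reduce the corollary to the two tools already built up: Proposition \ref{prop:lens_space_stable_parallel}, which translates stable parallelizability of $L(p,b)$ into a power-sum condition on the weights $b$, and Theorem \ref{thm:stably_parallel}, which converts stable parallelizability into the existence of a size-$(d{+}1)$ Parseval frame on the tangent bundle. Since $L(p,b)$ is a quotient of $S^{2n+1}$ by a free action of a finite cyclic group via (orientation-preserving) complex-linear isometries, it is automatically an orientable $(2n{+}1)$-manifold, so no extra hypothesis is needed to invoke Theorem \ref{thm:stably_parallel}. Also, the divisibility $n+1 \mid p-1$ together with primality of $p$ immediately gives $p \geq n+2$, so $p$ is odd and $n < p$; these are the background hypotheses of Proposition \ref{prop:lens_space_stable_parallel}.

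The substantive step is to exhibit a tuple $b=(b_0,\ldots,b_n)$ with $1 \le b_i \le p-1$ satisfying
\[
b_0^{2j} + \cdots + b_n^{2j} \equiv 0 \pmod{p}, \qquad 1 \le j \le \lfloor n/2 \rfloor.
\]
Because $n+1 \mid p-1$, the cyclic group $(\ZZ/p\ZZ)^\ast$ contains an element $\zeta$ of order exactly $n+1$. I will take $b_i \equiv \zeta^i \pmod p$. Then $\sum_{i=0}^n b_i^{2j} = \sum_{i=0}^n (\zeta^{2j})^i$ is a geometric series, and for $1 \le j \le \lfloor n/2 \rfloor$ the exponent $2j$ lies in $\{2,3,\ldots,n\}$, which is strictly smaller than the order of $\zeta$, so $\zeta^{2j} \ne 1$ while $(\zeta^{2j})^{n+1}=1$. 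The standard geometric-series identity then forces the sum to vanish mod $p$.

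Granted this verification, Proposition \ref{prop:lens_space_stable_parallel} tells us that $L(p,b)$ is stably parallelizable. Applying Theorem \ref{thm:stably_parallel} to the orientable Riemannian $(2n{+}1)$-manifold $L(p,b)$ (with any choice of Riemannian metric) then yields a Parseval frame of size $2n+2$ for $T L(p,b)$, as required. The only real obstacle is the power-sum verification, but the choice $b_i = \zeta^i$ converts it into a root-of-unity geometric sum, which is essentially automatic; thus the corollary amounts to assembling ingredients already proved.
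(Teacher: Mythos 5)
Your argument is correct, and it differs from the paper in the one step that carries all the content. The paper obtains a stably parallelizable $L(p,b)$ by citing Corollary~2.2 of \cite{Ewing_etal_lens_space}, whose proof goes through Deligne's resolution of the Weil conjectures to count solutions of the power-sum congruences in Proposition~\ref{prop:lens_space_stable_parallel}; the frame statement then follows from Theorem~\ref{thm:stably_parallel} exactly as in your last step. You instead verify the congruences with an explicit witness: since $n+1 \mid p-1$ and $(\ZZ/p\ZZ)^\ast$ is cyclic, there is a unit $\zeta$ of order exactly $n+1$, and with $b_i \equiv \zeta^i$ each sum $\sum_{i=0}^{n} b_i^{2j}$ is a geometric series with ratio $\zeta^{2j} \neq 1$ (because $0 < 2j \leq n < n+1$) and $(\zeta^{2j})^{n+1}=1$, hence vanishes in the field $\ZZ/p\ZZ$. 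Your bookkeeping is also in order: $n+1 \mid p-1$ with $p$ prime forces $p \geq n+2$, so $p$ is odd and $n < p$ as Proposition~\ref{prop:lens_space_stable_parallel} requires; each $b_i$ is a unit so the representative can be taken in $\{1,\dots,p-1\}$ and the action is free; and $L(p,b)$ is orientable as a free quotient of $S^{2n+1}$ by complex-linear isometries, so Theorem~\ref{thm:stably_parallel} applies with any metric. What your route buys is a self-contained, constructive, and entirely elementary proof of exactly the statement claimed, with no appeal to deep arithmetic geometry; what the paper's citation buys is the stronger quantitative conclusion recorded there, namely that stably parallelizable $(2n+1)$-dimensional lens spaces are in fact plentiful rather than merely existent.
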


\noindent These results were subsequently generalized by Kwak \cite{Kwak_lens_space}, who showed a similar condition for stable parallelizability of lens spaces obtained as quotients of smooth homotopy spheres.

In a complementary direction, we can also use Theorem \ref{thm:stably_parallel} to give many examples of $d$-manifolds that do not admit Parseval frames of size $d+1$. In particular:

\begin{theorem}\label{thm:no_d+1_frames}
Let $d$ be a natural number. Then there exists a closed orientable $d$-manifold that does not admit a Parseval frame of size $d+1$ if and only if $d>3$.
\end{theorem}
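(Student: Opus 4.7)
The proof has two directions, both of which are applications of Theorem \ref{thm:stably_parallel}.

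For the forward direction ($d > 3$), the plan is to produce, for each such $d$, a simply connected closed orientable $d$-manifold $M$ (so that $H^1(M,\ZZtwo) = 0$) which fails to be stably parallelizable; the converse part of Theorem \ref{thm:stably_parallel} then forces $TM$ to admit no Parseval frame of size $d+1$. The cleanest obstruction is non-vanishing of the second Stiefel--Whitney class. For $d=4$, I would take $M = \CP^2$, with $w_2 = x \neq 0$ in $H^2(\CP^2,\ZZtwo)$ via the standard computation $w(\CP^2) = (1+x)^3 \bmod 2$. For $d=5$, take the Wu manifold $W=\SU(3)/\SO(3)$; the homotopy exact sequence of the fibration $\SO(3)\to\SU(3)\to W$ gives $\pi_1(W)=0$, and $W$ is classically non-spin, so $w_2(W)\neq 0$. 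For $d\geq 6$, the product $\CP^2\times S^{d-4}$ is simply connected (since $d-4\geq 2$), and the Whitney formula combined with stable parallelizability of spheres gives $w_2(\CP^2\times S^{d-4}) = \pi_1^* w_2(\CP^2)\neq 0$.

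For the reverse direction ($d\leq 3$), the plan is to observe that every closed orientable manifold of dimension at most $3$ admits a Parseval frame of size $d+1$ for elementary reasons. The only closed orientable $1$-manifold is $S^1$, which is parallelizable, and every closed orientable $3$-manifold is parallelizable by Stiefel's theorem. Whenever $TM$ is trivial, one has $\P^{d+1}(TM)\cong M\times \P^{d+1}(\RR^d)$, and any constant section (that is, a fixed Parseval frame of size $d+1$ on $\RR^d$) provides the required global frame. The remaining case $d=2$ is handled directly by Proposition \ref{prop:parseval_surface}.

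The main obstacle is the dimension $d=5$ in the forward direction, where the obvious product $\CP^2\times S^{d-4}$ degenerates to $\CP^2\times S^1$, which has $H^1(\cdot,\ZZtwo)\neq 0$ and therefore does not satisfy the hypotheses of the converse half of Theorem \ref{thm:stably_parallel}. Overcoming this requires exhibiting a simply connected closed $5$-manifold with $w_2\neq 0$; the Wu manifold is the standard such example, but any non-spin simply connected closed $5$-manifold would suffice. Everything else reduces to routine characteristic-class computations and classical facts about parallelizability in low dimensions.
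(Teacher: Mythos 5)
Your proof is correct, and for the interesting direction ($d>3$) it takes a genuinely different route from the paper. Both arguments funnel through the converse half of Theorem~\ref{thm:stably_parallel}, i.e.\ they exhibit a closed orientable $d$-manifold with $H^1(M,\ZZtwo)=0$ that is not stably parallelizable; the difference is in the examples and in how non-stable-parallelizability is certified. The paper splits on parity: for odd $d>3$ it uses lens spaces $L(p,(1,\dots,1))$ with $p\in\{3,5\}$, invoking the Ewing--Moolgavkar--Smith--Stoltzfus modular-arithmetic criterion (Proposition~\ref{prop:lens_space_stable_parallel}), and for even $d$ it uses $\CP^{d/2}$, citing the classification of stably parallelizable projective spaces. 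You instead use the single stable obstruction $w_2\neq 0$, realized by $\CP^2$ for $d=4$, the Wu manifold $\SU(3)/\SO(3)$ for $d=5$, and $\CP^2\times S^{d-4}$ for $d\geq 6$; since $w(TM\oplus\RR)=w(TM)$, a nonzero $w_2$ rules out stable parallelizability, and your simple connectivity checks (Whitney formula plus K\"unneth for the products, the homotopy exact sequence for the Wu manifold) are all sound, as is your correct identification of $d=5$ as the case where the naive product degenerates. What each approach buys: yours is more self-contained and uniform, resting only on the Whitney product formula and stability of Stiefel--Whitney classes rather than on two cited external theorems; the paper's choice of lens spaces additionally showcases the lens-space machinery developed earlier in that section and produces examples with nontrivial (odd-order) fundamental group, illustrating that simple connectivity is not needed, only $H^1(M,\ZZtwo)=0$. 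The reverse direction ($d\leq 3$) is handled identically in both proofs.
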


\begin{proof}
The only closed 1-manifold is $S^1$, which is parallelizable and hence admits Parseval frames of any size. By Proposition \ref{prop:parseval_surface} we have that all closed orientable surfaces admit a Parseval frame of size 3. We have also already seen that all closed orientable 3-manifolds are parallelizable (see \cite{stiefel1935richtungsfelder}) and hence admit Parseval frames of any size that is at least 3.

In the other direction, the proof breaks into two cases depending on whether $d$ is even or odd. In the odd case, write $d=2k+1$, and let $b$ be the $k+1$ tuple $(1,\ldots,1)$. If $k>2$, let $p=3$ and if $k=2$, let $p=5$ and consider the lens space $L(p,b)$. This lens space is obtained via a free action by an order $p$ group on $S^{d}$, so the quotient map $S^d\to L(p,b)$ is a universal covering and we find that $\pi_1(L(p,b)\cong \ZZ/p\ZZ$. Since $p$ is odd, this implies that $H^1(L(p,b),\ZZtwo)=0$. If $k>2$ then no value of $b$ allows $L(p,b)$ to be stably parallelizable and if $k=2$ then the tuple $b$ does not satisfy the appropriate modular equation, and it follows from Proposition~\ref{prop:lens_space_stable_parallel} that $L(p,b)$ is never stably parallelizable, and hence Theorem \ref{thm:stably_parallel} implies that no Parseval frame of size $d+1$ exists. 

When $d$ is even, consider $\CP^{d/2}$. It is well known that $\CP^{d/2}$ is a simply connected $d$-manifold and hence $H^1(\CP^{d/2},\ZZtwo)$ is trivial.  From the main theorem of \cite{TrewZvenGrassmannManifolds} we have that $\CP^n$ is stably parallelizable iff $n=1$. Since $d>3$ we have $d/2>1$ and so $\CP^{d/2}$ is not stably parallelizable, and hence by Theorem \ref{thm:stably_parallel} $\CP^{d/2}$ does not admit a Parseval frame of size $d+1$.

\end{proof}

\subsection{A classifying space perspective}\label{sec:obstructions}

We now provide a short discussion of the existence of Parseval frames from the perspective of classifying spaces. We begin with general constructions, so we temporarily let $G$ be an arbitrary group. There is a certain principal $G$-bundle, traditionally denoted as $EG\to BG$, that classifies all principal $G$-bundles over CW complexes. The base $BG$ of this bundle is known as the \define{classifying space} for $G$, $EG$ is called the \define{universal bundle} for $G$, and the sense in which this structure classifies principal $G$-bundles is as follows. Given a CW complex $Y$ and a continuous map $f:Y\to BG$, we can form a principal $G$-bundle $f^\ast(EG) \to Y$ via the pullback construction. In this construction, we call $f$ the \define{classifying map} of the principal $G$-bundle $f^\ast(EG)$. Some important properties of the universal bundle are summarized below.

\begin{proposition}[See, e.g., Sections 3.4 and 3.5 of \cite{rudolph2017differential}]
For a Lie group $G$ and CW complex $Y$:
\begin{itemize}
    \item the universal bundle $EG \to BG$ exists and is unique up to a natural notion of isomorphism;
    \item every principal $G$-bundle over $Y$ arises as a pullback $f^\ast(EG)$ for a classifying map $f:Y \to BG$;
    \item two $G$-bundles are isomorphic if and only if their classifying maps are homotopic;
    \item a principal $G$-bundle is trivial if the classifying map $f:Y \to BG$ can be lifted to $\tilde f:Y \to EG$.
\end{itemize}
\end{proposition}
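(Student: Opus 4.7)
The plan is to construct the universal bundle explicitly, use its contractibility to establish universality and classification, and then translate triviality into a lifting problem. Throughout, the reduction of the classification to a homotopy problem rests on the homotopy lifting property of principal $G$-bundles together with the fact that $EG$ has trivial homotopy type.

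First, I would build $EG$ via Milnor's infinite join $EG = G \ast G \ast G \ast \cdots$, equipped with the diagonal $G$-action. A direct argument shows that this action is free and continuous, and that $EG$ is contractible (each finite join $G^{\ast n}$ is $(n-2)$-connected, so the colimit is weakly contractible, and Milnor's topology makes it a CW complex, so contractibility follows from Whitehead's theorem). Define $BG \coloneqq EG/G$. Standard slice arguments show $EG \to BG$ is a principal $G$-bundle. Uniqueness up to natural isomorphism is a corollary of the universality proved in the next step: any other contractible total space with a free $G$-action gives a bundle whose classifying map into $BG$ and back is necessarily a homotopy equivalence on base spaces.

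For universality, let $P \to Y$ be a principal $G$-bundle with $Y$ a CW complex. I would produce a $G$-equivariant map $\tilde f \colon P \to EG$ by induction on a cellular filtration of $P$ obtained from one on $Y$: over the $0$-skeleton choose arbitrary equivariant lifts, and extend over each $n$-cell using that $\pi_{n-1}(EG) = 0$. Passing to the quotient gives $f \colon Y \to BG$ with $f^\ast(EG) \cong P$. For the statement that two classifying maps $f_0, f_1$ of isomorphic bundles are homotopic, I would run the same construction on $Y \times [0,1]$ starting from the given bundle isomorphism over $Y \times \{0,1\}$; the resulting classifying map $H \colon Y \times [0,1] \to BG$ is the required homotopy. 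Conversely, a homotopy $H$ yields an isomorphism $f_0^\ast(EG) \cong f_1^\ast(EG)$ via the homotopy lifting property for the bundle $H^\ast(EG) \to Y \times [0,1]$.

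For the final bullet, observe that a principal $G$-bundle $P \to Y$ is trivial if and only if it admits a global section, and that sections of the pullback $f^\ast(EG)$ correspond bijectively to $G$-equivariant maps $Y \to EG$ over $BG$, that is, to lifts $\tilde f \colon Y \to EG$ of $f$. This reduces triviality to the lifting problem. The main obstacle in the whole argument is the inductive construction of the equivariant classifying map $\tilde f \colon P \to EG$: getting the topology on $EG$ (CW structure, local triviality of $EG \to BG$) correct so that the cell-by-cell extension is legitimate is the technical heart of the proof, and is exactly where contractibility of $EG$ is used in an essential way.
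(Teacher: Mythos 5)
Your proposal is correct and follows the standard Milnor join construction together with the obstruction-theoretic argument for universality and the section/lift correspondence for triviality; this is essentially the treatment in the reference the paper cites, as the paper itself offers no proof of this proposition beyond that citation. The only caveat is the well-known point-set subtlety about which topology to put on the infinite join (Milnor proves contractibility directly by an explicit shift homotopy rather than via a CW structure and Whitehead's theorem), but for a Lie group $G$ this is routine and does not affect the argument.
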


We now specify to the setting of Parseval frames.

\begin{proposition}\label{prop:climbing_whithead_tower}
    Let $E\to M$ be an orientable rank-$k$ Riemannian vector bundle. Let $n>k$, let $G=\SO(k)\times \SO(n)$, and let $G(E)$ be the principal $G$-bundle associated to the Parseval bundle $\P^n(E)$ with corresponding classifying map $f:M\to BG$. If $f$ can be lifted to $\tilde f:M\to EG$, then $\P^n(E)$ is a trivial bundle. Moreover, $E$ admits a Parseval frame of size $n$. 
\end{proposition}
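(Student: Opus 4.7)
The plan is to chain together the bulleted properties of the universal bundle (from the proposition preceding the statement) with the associated-bundle description of $\P^n(E)$ given in Proposition~\ref{prop:associated_fiber_bundles}. There is really very little to do: almost every ingredient has been assembled earlier, and the task is to connect them in the correct order.

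First I would invoke the final bullet of the characterization of the universal bundle: a principal $G$-bundle whose classifying map $f:M \to BG$ admits a lift $\tilde f:M\to EG$ is necessarily trivial. Applying this to $f$ gives the triviality of the principal $G$-bundle $G(E)$, meaning there is a bundle isomorphism $G(E) \cong M \times G$ compatible with the right $G$-action.

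Next I would pass from the principal bundle to the associated Parseval bundle. By Proposition~\ref{prop:associated_fiber_bundles}, there is an isomorphism of $\P^n(\RR^k)$-bundles
\[
\P^n(E) \cong G(E) \times_G \P^n(\RR^k).
\]
Substituting the trivialization $G(E) \cong M\times G$ and using the standard fact that $(M\times G)\times_G F \cong M\times F$ for any left $G$-space $F$ (the map $[(p,g),x]\mapsto (p,g\cdot x)$ is a well-defined bijection, continuous with continuous inverse), we obtain $\P^n(E) \cong M \times \P^n(\RR^k)$. Thus $\P^n(E)$ is a trivial fiber bundle, proving the first assertion.

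For the second assertion, any trivial fiber bundle admits a (smooth) section: choose any point $x_0\in \P^n(\RR^k)$, and let $\sigma(p) = (p, x_0)$ under the trivialization. By the defining construction of $\P^n(E)$ from Section~\ref{sec:parseval_bundle}, a section of $\P^n(E)$ is precisely a size-$n$ Parseval frame on $E$, completing the proof. The main (and really only) conceptual point to verify carefully is the compatibility of the trivialization of $G(E)$ with the associated-bundle construction; everything else is a direct appeal to the structural results established earlier in the paper.
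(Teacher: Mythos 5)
Your proposal is correct and follows essentially the same route as the paper's proof: lift $\Rightarrow$ triviality of the principal bundle $G(E)$ $\Rightarrow$ triviality of the associated bundle $\P^n(E)$ $\Rightarrow$ existence of a section. The only cosmetic difference is that the paper builds the trivializing section of $G(E)$ explicitly from $\tilde f$ and phrases the last step in terms of shared transition functions, whereas you cite the universal-bundle bullet point and the identity $(M\times G)\times_G F\cong M\times F$; the content is the same.
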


\begin{proof}
If the classifying map can be lifted to $\tilde f:M\to EG$ then we can define a section $\sigma:M\to G(E)\cong f^\ast(EG)$ via $\sigma(m)=(m,\tilde f(m))$. Since $G(E)$ is a principal bundle, such a section provides a global trivialization of $G(E)$. This implies that the structure group for $G(E)$ can be reduced to the trivial group. Since the transition functions for $\P^n(E)$ and $G(E)$ are the same, this implies $\P^n(E)$ is also trivial. The last assertion then follows trivially. 
\end{proof}

\section{Generic Sections and Spectral covers}\label{sec:spectra}

In this section, we consider the question of the existence of sections of generic frames.

\subsection{Frame spectra and genericity.}
Let $V$ be a $k$-dimensional real vector space equipped with an inner product. Let $A\in \F^n(V)$ be a frame and let $AA^T \in \End(V)$ be the corresponding frame operator, which is positive definite. If $AA^T$ has eigenvalues $\lambda_i$, we let $\spec(A) \coloneqq (\lambda_1,\ldots,\lambda_k)$, where we order the eigenvalues such that $\lambda_1 \geq \lambda_2 \geq \cdots \geq \lambda_k > 0$. This \define{spectral operator} can be considered as a map $\spec:\F^n(V) \to \Sc_k$, where $\Sc_k\coloneqq\{(x_1,\ldots, x_k) \in \RR^k \mid x_1\geq \ldots\geq x_k>0\}$. The interior of $\Sc_k$ is given by $\Sc_k^\circ\coloneqq\{(x_1,\ldots,x_k)\in \Sc_k \mid x_i>x_{i+1} \, \forall \, i\}$.

\begin{definition}
    A frame $A \in \F^n(V)$ is called \define{generic} if $\spec(A) \in \Sc_k^\circ$. 
\end{definition}

\begin{remark}\label{rmk:generic_terminology}
    The set $\spec^{-1}\big(\Sc_k^\circ\big)$ of generic frames is a dense open subset of $\F^n(V)$, so that the terminology is justified.
\end{remark}


For $x=(x_1,\ldots, x_k)\in \Sc_k$ we define the \define{multiplicity of the $i$th entry} as $
\mu_i(x)=\abs{\{j\mid x_i=x_j\}}$. By definition, a frame $A$ is generic if and only if $\mu_i(\spec(A)) = 1$ for all $i$. On the other hand, $A$ is tight if and only if $\mu_i(\spec(A)) = k$ for all $i$. In this sense, we can think of Parseval frames and generic frames as living at opposite ends of the gamut of genericity.

Next, suppose that $E\to M$ is an orientable rank-$k$ Riemannian vector bundle over a connected $d$-dimensional manifold. Theorem~\ref{thm:parseval_sections_exist} tells us that if $n \geq d + k$  then we will be able to find Parseval frames (i.e.\ sections of $\P^n(E)\to M$). With this context in mind, it is natural to ask the same question at the other extreme of genericity; that is, whether it is possible to find sections of $\F^n(M)$ that are everywhere generic. We now set precise notation and terminology regarding this question.

\begin{definition}
    Let $\sigma:M\to \F^n(E)$ be a section of the frame bundle. The associated \define{spectral operator} $\spec_\sigma:M\to \Sc_k$ is defined by $\spec_\sigma \coloneqq \spec \circ \sigma$. The associated \define{multiplicity functions} $\mu_{i,\sigma}:M\to \NN$ are defined by $\mu_{i,\sigma} \coloneqq \mu_i \circ \spec_\sigma$. We say that $\sigma$ is \define{generic} if $\spec_\sigma(M)\subset \Sc_k^\circ$ and that $\sigma$ is \define{$i$-generic} if $\mu_{i,\sigma}(p) = 1$ for all $p \in M$.
\end{definition}

\begin{remark}
    For any section $\sigma:M\to \F^n(E)$, $\spec_\sigma$ is continuous and $\mu_{i,\sigma}$ is upper semicontinuous.
\end{remark}

\subsection{An obstruction to generic sections.}
Naively, one might expect that most sections of $\F^n(E)$ will be generic. However, as we will see, the topology of both $M$ and $E$ can obstruct such global genericity of sections. To see this, we introduce the notion of \emph{spectral covers}. Let $\sigma:M \to \F^n(E)$ be a section and suppose that there is $1\leq i\leq k$ such that $\sigma$ is $i$-generic. The $i$-genericity condition means that the $i$th singular value of the frame $\sigma(p)$ has multiplicity 1 for each $p\in M$. This multiplicity-1 singular value gives rise to a 1-dimensional eigenspace in each fiber of $E$, or in other words a rank-$1$ subbundle of $E$. Using the Riemannian structure, we can pick out the two vectors of length 1 in each fiber. This gives rise to an $S^0$-bundle over $M$. Such an $S^0$-bundle over $M$ is nothing more than a (possibly disconnected) double cover of $M$. With this construction in mind we make the following definition. 

\begin{definition}
    Let $\sigma:M \to \F^n(E)$ be an $i$-generic section. The double cover of $M$ constructed from the induced rank-1 subbundle of $M$ is called the \define{$i$th spectral cover associated to $\sigma$}. 
\end{definition}

The next theorem shows how the topology of $M$ and $E$ can obstruct the existence of spectral covers and hence of generic sections. 

\begin{theorem}\label{thm:obstruction_to_generic_sections}
Let $E\to M$ be an oriented rank-$k$ Riemannian vector bundle over a connected $d$-manifold and let $\sigma:M \to \F^n(E)$ be a section. Suppose that $E\to M$ has non-trivial Euler class and that $H_1(M,\ZZtwo)=0$. Then $\sigma$ is not $i$-generic for any $1\leq i\leq k$.
\end{theorem}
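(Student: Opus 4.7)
The plan is to show that an $i$-generic section would produce a nowhere-vanishing section of $E$, contradicting the non-triviality of the Euler class. Assume for contradiction that $\sigma : M \to \F^n(E)$ is $i$-generic for some $1 \leq i \leq k$. Then the $i$-th singular value of the frame $\sigma(p)$ has multiplicity one at every $p \in M$, and the resulting simple eigenspaces of the frame operators assemble into a rank-$1$ subbundle $L \subset E$; continuity (or smoothness) of $L$ as a subbundle follows from the standard fact that a simple eigenvalue of a continuously varying family of self-adjoint operators, together with its eigenspace, depends continuously on the parameter. The $i$-th spectral cover of $\sigma$ is precisely the unit sphere bundle $\pi : \tilde{M} \to M$ of $L$, and $L$ is trivial as a line bundle if and only if this double cover is trivial.

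The next step is to force $\pi$ to be trivial using the hypothesis on $M$. Isomorphism classes of double covers of $M$ (equivalently, principal $\ZZtwo$-bundles, or rank-$1$ real vector bundles) are classified by $H^1(M, \ZZtwo)$. Since $\ZZtwo$ is a field, the universal coefficient theorem in a single degree gives
\[
H^1(M, \ZZtwo) \;\cong\; \Hom_{\ZZtwo}(H_1(M, \ZZtwo), \ZZtwo) \;=\; 0,
\]
where the last equality uses the assumption $H_1(M, \ZZtwo) = 0$. Hence $\pi$ is trivial, so $\tilde{M} \cong M \sqcup M$, and selecting a single sheet yields a continuous nowhere-vanishing unit section $v : M \to L \subset E$.

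In particular, $v$ is a nowhere-vanishing section of $E$. This contradicts the hypothesis on the Euler class: for an oriented rank-$k$ real vector bundle, the Euler class $e(E) \in H^k(M, \ZZ)$ is the primary obstruction to the existence of a global nowhere-vanishing section, and in particular $e(E) = 0$ whenever such a section exists. This contradicts the assumed non-triviality of $e(E)$, so no $i$-generic section $\sigma$ can exist for any $1 \leq i \leq k$. The only technical point I anticipate needing care is justifying that the fiberwise choice of simple eigenspace genuinely assembles into a continuous subbundle $L$; once this regularity is in hand, the remainder of the argument is a direct chain of classical facts (classification of double covers by $H^1(\,\cdot\,,\ZZtwo)$, universal coefficients over a field, and the obstruction-theoretic interpretation of the Euler class).
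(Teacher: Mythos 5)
Your proof is correct and follows essentially the same route as the paper's: both hinge on the $i$th spectral cover, the vanishing of $H^1(M,\ZZtwo)$ (equivalently, the absence of nontrivial homomorphisms $\pi_1(M)\to\ZZtwo$), and the Euler class as the obstruction to a nowhere-vanishing section. The only difference is organizational---the paper first uses the Euler class to rule out a disconnected cover and then uses $H_1(M,\ZZtwo)=0$ to rule out a connected one, whereas you use $H_1(M,\ZZtwo)=0$ up front to trivialize the cover and then invoke the Euler class once---but the ingredients and logic are identical.
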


\begin{proof}
Suppose for contradiction that there is some $i$ for which $\sigma$ is $i$-generic. Let $M_i\to M$ be the $i$th spectral cover associated to $\sigma$. First, we show that $M_i$ must be connected. Otherwise, $M_i$ is a disjoint union of two copies of $M$ and hence this cover admits a section. Since $M_i$ embeds in $E$ and is disjoint from the zero section it follows that we can construct a nowhere zero section $\tau:M\to E$. However, this contradicts our hypothesis that $E$ has non-trivial Euler class, hence $M_i$ is connected. 

By the above reasoning, we may assume that $M_i$ is a connected double cover. It is therefore regular and hence corresponds to the kernel of a non-trivial homomorphism $\phi:\pi_1(M)\to \ZZtwo$. However, $H_1(M,\ZZtwo)=0$ and so there is no such homomorphism and we arrive at a contradiction. 
\end{proof}

The following corollary shows how Theorem~\ref{thm:obstruction_to_generic_sections} can be applied to simply connected 4-manifolds

\begin{corollary}\label{cor:simply_connected_4_manifolds}
Let $M$ be a simply connected Riemannian 4-manifold. For any frame $\sigma\in \F^n(TM)$, $\sigma$ is not $i$-generic for any $1\leq i \leq 4$. 
\end{corollary}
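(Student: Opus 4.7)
The plan is to deduce this from Theorem \ref{thm:obstruction_to_generic_sections} applied to $E = TM$, by verifying its two hypotheses: $H_1(M,\ZZtwo) = 0$ and that $e(TM) \neq 0$. (We implicitly take $M$ to be closed here, as otherwise $TM$ can easily have trivial Euler class, e.g., for $M = \RR^4$.)

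First I would note that a simply connected manifold is connected and orientable, so the orientability hypothesis of Theorem \ref{thm:obstruction_to_generic_sections} is automatic. For $H_1(M,\ZZtwo)$, the Hurewicz theorem gives $H_1(M,\ZZ) \cong \pi_1(M)^{\mathrm{ab}} = 0$, and then the universal coefficient theorem immediately yields $H_1(M,\ZZtwo) = 0$.

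The remaining task is to verify that $e(TM) \neq 0$. I would invoke the fact that for any closed oriented smooth manifold, the Euler class of the tangent bundle pairs with the fundamental class to give the Euler characteristic: $\langle e(TM), [M]\rangle = \chi(M)$. For a closed simply connected $4$-manifold, Poincaré duality and $\pi_1(M) = 0$ give $b_0(M) = b_4(M) = 1$ and $b_1(M) = b_3(M) = 0$, so
\[
\chi(M) = 2 + b_2(M) \geq 2 > 0.
\]
Hence $e(TM)$ is nonzero in $H^4(M,\ZZ)$.

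With both hypotheses verified, Theorem \ref{thm:obstruction_to_generic_sections} applied to $E = TM$ (rank $k = 4$) yields that no section $\sigma \in \F^n(TM)$ can be $i$-generic for any $1 \leq i \leq 4$. There is no real obstacle here; the whole content lies in recognizing that the topological constraints on closed simply connected $4$-manifolds force both vanishing of $H_1(\cdot,\ZZtwo)$ and non-vanishing of the Euler class, so that the obstruction theorem fires for every index $i$ simultaneously.
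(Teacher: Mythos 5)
Your proof is correct and follows essentially the same route as the paper's: deduce $H_1(M,\ZZtwo)=0$ and orientability from simple connectedness, show $\chi(M)>0$ via Poincar\'e duality so that $e(TM)\neq 0$, and invoke Theorem~\ref{thm:obstruction_to_generic_sections}. Your explicit remark that $M$ must be taken to be closed (since, e.g., $T\RR^4$ is trivial) is a worthwhile observation that the paper leaves implicit, as its own use of Poincar\'e duality and the identity $\langle e(TM),[M]\rangle=\chi(M)$ also requires compactness.
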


\begin{proof}
Since $M$ is simply connected we have $H_1(M,\ZZtwo)=0$. Furthermore, if $M$ were non-orientable then it would have a connected orientation double cover. This would correspond to a non-trivial element of $H_1(M,\ZZtwo)$, and thus $M$ and hence $TM$ are both orientable. 

For a tangent bundle, vanishing of the Euler class is the same as having zero Euler characteristic. Let $b_i=\dim(H_i(M,\QQ))$ be the $i$th Betti number of $M$. The Euler characteristic can be computed as $\chi(M)=\sum_{i=0}^4(-1)^ib_i$. Since $M$ is simply connected, we have $b_1=0$ and by Poincar\'e duality it follows that $b_3=0$. Since $M$ is connected, we have $b_0=1$, and therefore $\chi(M)>0$. The result then follows by Theorem \ref{thm:obstruction_to_generic_sections}.
\end{proof}

\begin{remark}\label{rem:lots_of_4_manifolds}
    There are infinitely many simply connected 4-manifolds, including $S^4$, $S^2\times S^2$, $\CP^2$, rational surfaces, elliptic surfaces, and various fiber sums, surgeries, blowups, and blowdowns thereon, to which Corollary~\ref{cor:simply_connected_4_manifolds} can be applied (see~\cite{Stern_4_manifolds} and the references therein for details).
\end{remark}

\bibliographystyle{plain}
\bibliography{bibliography}

\begin{thebibliography}{10}

\bibitem{agrawal2015fiber}
Devanshu Agrawal and Jeff Knisley.
\newblock Fiber bundles and {Parseval} frames.
\newblock Preprint, \href{https://arxiv.org/abs/1512.03989}{\tt
  arXiv:1512.03989 [math.FA]}, 2015.

\bibitem{auckly2018parseval}
David Auckly and Peter Kuchment.
\newblock On {Parseval} frames of exponentially decaying composite {Wannier}
  functions.
\newblock {\em Mathematical Problems in Quantum Physics}, 717:227--240, 2018.

\bibitem{benedetto2001wavelet}
John~J. Benedetto and Oliver~M. Treiber.
\newblock Wavelet frames: {Multiresolution} analysis and extension principles.
\newblock In Lokenath Debnath, editor, {\em Wavelet Transforms and
  Time-Frequency Signal Analysis}, Applied and Numerical Harmonic Analysis,
  pages 3--36. Birkhäuser, Boston, MA, USA, 2001.

\bibitem{bodmann2005frames}
Bernhard~G. Bodmann and Vern~I. Paulsen.
\newblock Frames, graphs and erasures.
\newblock {\em Linear Algebra and its Applications}, 404:118--146, 2005.

\bibitem{bownik2022parseval}
Marcin Bownik, Karol Dziedziul, and Anna Kamont.
\newblock Parseval wavelet frames on {Riemannian} manifold.
\newblock {\em The Journal of Geometric Analysis}, 32:1--43, 2022.

\bibitem{bronstein2021geometric}
Michael~M. Bronstein, Joan Bruna, Taco Cohen, and Petar Veli{\v{c}}kovi{\'c}.
\newblock {Geometric Deep Learning: Grids, Groups, Graphs, Geodesics, and
  Gauges}.
\newblock Preprint, \href{https://arxiv.org/abs/2104.13478}{\tt
  arXiv:2104.13478 [cs.LG]}, 2021.

\bibitem{cahill2017connectivity}
Jameson Cahill, Dustin~G. Mixon, and Nate Strawn.
\newblock Connectivity and irreducibility of algebraic varieties of finite unit
  norm tight frames.
\newblock {\em SIAM Journal on Applied Algebra and Geometry}, 1(1):38--72,
  2017.

\bibitem{cairns1935triangulation}
Stewart~S. Cairns.
\newblock Triangulation of the manifold of class one.
\newblock {\em Bulletin of the American Mathematical Society}, 41(8):549--552,
  1935.

\bibitem{Cappell_Shaneson_Embeddings}
Sylvanin~E. Cappell and Julius~L. Shaneson.
\newblock Embeddings and immersions of four-dimensional manifolds in {${\bf
  R}\sp{6}$}.
\newblock In {\em Geometric Topology ({P}roceedings of the {G}eorgia {T}opology
  {C}onference, {A}thens, {GA}, 1977)}, pages 301--303. Academic Press, New
  York--London, 1979.

\bibitem{casazza2003equal}
Peter~G. Casazza and Jelena Kova{\v{c}}evi{\'c}.
\newblock Equal-norm tight frames with erasures.
\newblock {\em Advances in Computational Mathematics}, 18:387--430, 2003.

\bibitem{cohenbundles}
Ralph~L. Cohen.
\newblock {Bundles, Homotopy, and Manifolds}.
\newblock \url{http://math.stanford.edu/~ralph/book.pdf}, 2023.

\bibitem{dykema2003manifold}
Ken Dykema and Nate Strawn.
\newblock Manifold structure of spaces of spherical tight frames.
\newblock {\em International Journal of Pure and Applied Mathematics},
  28(2):217--256, 2006.

\bibitem{Ewing_etal_lens_space}
John Ewing, Suresh Moolgavkar, Larry Smith, and Robert~E. Stong.
\newblock Stable parallelizability of lens spaces.
\newblock {\em Journal of Pure and Applied Algebra}, 10(2):177--191, 1977/78.

\bibitem{freeman2012moving2}
Daniel Freeman, Ryan Hotovy, and Eileen Martin.
\newblock Moving finite unit norm tight frames for {$S^n$}.
\newblock {\em Illinois Journal of Mathematics}, 58(2):311--322, 2014.

\bibitem{freeman2012moving}
Daniel Freeman, Daniel Poore, Ann~Rebecca Wei, and Madeline Wyse.
\newblock Moving {P}arseval frames for vector bundles.
\newblock {\em Houston Journal of Mathematics}, 40(3):817--832, 2014.

\bibitem{han2000frames}
Deguang Han and David~R. Larson.
\newblock Frames, bases and group representations.
\newblock {\em Memoirs of the American Mathematical Society}, 147(697), 2000.

\bibitem{Kervaire_smooth_homology_spheres}
Michel~A. Kervaire.
\newblock Smooth homology spheres and their fundamental groups.
\newblock {\em Transactions of the American Mathematical Society}, 144:67--72,
  1969.

\bibitem{Kervaire_Milnor_Homotopy_Spheres}
Michel~A. Kervaire and John~W. Milnor.
\newblock Groups of homotopy spheres. {I}.
\newblock {\em Annals of Mathematics. Second Series}, 77:504--537, 1963.

\bibitem{kovavcevic2008introduction}
Jelena Kova{\v{c}}evi{\'c} and Amina Chebira.
\newblock An introduction to frames.
\newblock {\em Foundations and Trends in Signal Processing}, 2(1):1--94, 2008.

\bibitem{kuchment2008tight}
Peter Kuchment.
\newblock Tight frames of exponentially decaying {Wannier} functions.
\newblock {\em Journal of Physics A: Mathematical and Theoretical},
  42(2):025203, 2008.

\bibitem{kuchment2016overview}
Peter Kuchment.
\newblock An overview of periodic elliptic operators.
\newblock {\em Bulletin of the American Mathematical Society}, 53(3):343--414,
  2016.

\bibitem{Kwak_lens_space}
Jin~Ho Kwak.
\newblock The stable parallelizability of a smooth homotopy lens space.
\newblock {\em Journal of Pure and Applied Algebra}, 50(2):155--169, 1988.

\bibitem{May_concise_alg_top}
J.~Peter May.
\newblock {\em A Concise Course in Algebraic Topology}.
\newblock Chicago Lectures in Mathematics. University of Chicago Press,
  Chicago, IL, USA, 1999.

\bibitem{melzi2019gframes}
Simone Melzi, Riccardo Spezialetti, Federico Tombari, Michael~M. Bronstein,
  Luigi~Di Stefano, and Emanuele Rodola.
\newblock Gframes: Gradient-based local reference frame for {3D} shape
  matching.
\newblock In {\em Proceedings of the IEEE/CVF Conference on Computer Vision and
  Pattern Recognition}, pages 4629--4638, 2019.

\bibitem{mixon2021three}
Dustin~G. Mixon, Tom Needham, Clayton Shonkwiler, and Soledad Villar.
\newblock Three proofs of the {Benedetto--Fickus} theorem.
\newblock Preprint, \href{https://arxiv.org/abs/2112.02916}{\tt
  arXiv:2112.02916 [math.MG]}, 2021.

\bibitem{munch1992noise}
Niels~Juul Munch.
\newblock Noise reduction in tight {Weyl-Heisenberg} frames.
\newblock {\em IEEE Transactions on Information Theory}, 38(2):608--616, 1992.

\bibitem{needham2021symplectic}
Tom Needham and Clayton Shonkwiler.
\newblock Symplectic geometry and connectivity of spaces of frames.
\newblock {\em Advances in Computational Mathematics}, 47(1):5, 2021.

\bibitem{needham2022admissibility}
Tom Needham and Clayton Shonkwiler.
\newblock Admissibility and frame homotopy for quaternionic frames.
\newblock {\em Linear Algebra and its Applications}, 645:237--255, 2022.

\bibitem{needham2022toric}
Tom Needham and Clayton Shonkwiler.
\newblock Toric symplectic geometry and full spark frames.
\newblock {\em Applied and Computational Harmonic Analysis}, 61:254--287, 2022.

\bibitem{needham2023fusion}
Tom Needham and Clayton Shonkwiler.
\newblock Fusion frame homotopy and tightening fusion frames by gradient
  descent.
\newblock {\em Journal of Fourier Analysis and Applications}, 29(4):51, 2023.

\bibitem{rudolph2017differential}
Gerd Rudolph and Matthias Schmidt.
\newblock {\em Differential Geometry and Mathematical Physics: Part II. Fibre
  Bundles, Topology and Gauge Fields}.
\newblock Springer, Dordrecht, 2017.

\bibitem{frames-on-manifolds-code}
Clayton Shonkwiler.
\newblock {Frames on Manifolds}.
\newblock \url{https://github.com/shonkwiler/frames-on-manifolds}.

\bibitem{sloane}
Neil J.~A. Sloane, with the collaboration of Ronald~H. Hardin, Warren~D. Smith,
  et~al.
\newblock Tables of spherical codes.
\newblock \url{http://neilsloane.com/packings/}.

\bibitem{opensimplex}
Kurt Spencer.
\newblock {OpenSimplex} 2.
\newblock \url{https://github.com/KdotJPG/OpenSimplex2}.

\bibitem{Steenrod_Fibre_Bundles}
Norman Steenrod.
\newblock {\em The {T}opology of {F}ibre {B}undles}.
\newblock Princeton Mathematical Series, vol. 14. Princeton University Press,
  Princeton, NJ, USA, 1951.

\bibitem{Stern_4_manifolds}
Ronald~J. Stern.
\newblock Will we ever classify simply-connected smooth 4-manifolds?
\newblock In David~A. Ellwood, Peter~S. Ozsv\'ath, Andr\'as~I. Stipsicz, and
  Zolt\'an Szab\'o, editors, {\em Floer Homology, Gauge Theory, and
  Low-Dimensional Topology}, volume~5 of {\em Clay Mathematics Proceedings},
  pages 225--239. American Mathematical Society, Providence, RI, USA, 2006.

\bibitem{stiefel1935richtungsfelder}
Eduard Stiefel.
\newblock {Richtungsfelder und Fernparallelismus in $n$-dimensionalen
  Mannigfaltigkeiten}.
\newblock {\em Commentarii Mathematici Helvetici}, 8(1):305--353, 1935.

\bibitem{strawn2011finite}
Nate Strawn.
\newblock Finite frame varieties: {N}onsingular points, tangent spaces, and
  explicit local parameterizations.
\newblock {\em Journal of Fourier Analysis and Applications}, 17(5):821--853,
  2011.

\bibitem{TrewZvenGrassmannManifolds}
S.~Trew and P.~Zvengrowski.
\newblock Nonparallelizability of {G}rassmann manifolds.
\newblock {\em Canad. Math. Bull.}, 27(1):127--128, 1984.

\bibitem{waldron2018introduction}
Shayne F.~D. Waldron.
\newblock {\em An Introduction to Finite Tight Frames}.
\newblock Applied and Numerical Harmonic Analysis. Birkhäuser, New York, NY,
  USA, 2018.

\bibitem{whitehead1940c1}
J.~H.~C. Whitehead.
\newblock On {$C^1$}-complexes.
\newblock {\em Annals of Mathematics. Second Series}, 41(4):809--824, 1940.

\bibitem{Wockel_Steenrod_Approximation}
Christoph Wockel.
\newblock A generalization of {S}teenrod's approximation theorem.
\newblock {\em Archivum Mathematicum (Brno)}, 45(2):95--104, 2009.

\end{thebibliography}

\end{document}